\definecolor{mysoftblue}{hsb}{0.55,0.15,0.9}
\definecolor{mysoftpurple}{hsb}{0.9,0.25,0.9}
\definecolor{mysoftorange}{hsb}{0.15,0.3,0.95}
\newtheorem{lemma}{Lemma}[section]
\newtheorem{theorem}{Theorem}[section]
\newtheorem{corollary}{Corollary}[section]
\newtheorem{definition}{Definition}[section]
\newtheorem{remark}{Remark}[section]
\numberwithin{equation}{section} \numberwithin{theorem}{section}
\numberwithin{example}{section} \numberwithin{remark}{section}
\numberwithin{figure}{section} \numberwithin{algorithm}{section}
\begin{document}
\title[Phase transitions in two-component Bose-Einstein condensates (I)]{Phase transitions in two-component Bose-Einstein condensates with Rabi frequency (I): The De Giorgi conjecture for the local problem in $\mathbb{R}^{3}$}
\author{Leyun Wu}
\address{School of Mathematics, South China University of Technology, Guangzhou, 510640, P. R. China}\email{leyunwu@scut.edu.cn}
\author{Chilin Zhang}
\address{School of Mathematical Sciences, Fudan University, Shanghai 200433, P. R. China}\email{zhangchilin@fudan.edu.cn}

\begin{abstract}

We consider the following elliptic system modeling Rabi-coupled two-component 
Bose-Einstein condensates,
\[
\begin{cases}
\Delta u = u(u^{2}+v^{2}-1) + v(\alpha uv-\omega),\\[1mm]
\Delta v = v(u^{2}+v^{2}-1) + u(\alpha uv-\omega),
\end{cases}
\qquad u,v>0 \ \text{in }\mathbb{R}^{3},
\]
and establish the first De~Giorgi-type classification result for this competitive 
system. We prove that any entire solution satisfying the monotonicity condition
\[
\partial_{x_{3}}u>0>\partial_{x_{3}}v
\]
is necessarily one-dimensional and connects the two stable equilibria determined 
by the parameters $(\alpha,\omega)$.

The proof relies on two new ingredients. 
First, a sharp range estimate reveals a strictly negative defect structure in the 
linearized operator, leading to a divergence-form identity that forces the directional 
slopes of all level sets to be constant. 
Second, we develop a translation-invariant energy evolution method for the associated 
coupled Ginzburg-Landau functional, yielding an $O(R^{2})$ energy-growth bound and a 
Liouville-type theorem.

Our results extend the classical classification results from the scalar Allen-Cahn equation to a physically relevant coupled system, and provide the first rigorous confirmation that De Giorgi's phenomenon persists under Rabi coupling.


\end{abstract}

\maketitle
\noindent{\bf Keywords.} Bose-Einstein condensates, phase transitions, Ginzburg-Landau theory, De Giorgi conjecture, coupled elliptic systems, Liouville theorem.
\\
2020 {\bf MSC.} 35Q56, 82B26, 35J47, 35B53.

\section{Introduction}
In the 1970s, De Giorgi (\cite{DeGiorgi1979}) proposed a celebrated conjecture  concerning the
symmetry of monotone solutions to semilinear elliptic equations. More precisely, let $u$
be a bounded entire solution of the Allen-Cahn equation
\begin{equation}\label{eq:AC}
    \Delta u = u^{3}-u \quad \text{in } \mathbb{R}^{n}
\end{equation}
satisfying the monotonicity condition
\[
    \frac{\partial u}{\partial x_{n}} > 0 \quad \text{in } \mathbb{R}^{n}.
\]
De Giorgi conjectured that, at least for dimensions $n \leq 8$, the level sets
of $u$ must be hyperplanes. Equivalently, any such monotone solution should be
one-dimensional, i.e., $u(x) = g(x \cdot \nu)$ for some function $g:\mathbb{R}
\to \mathbb{R}$ and unit vector $\nu \in \mathbb{R}^{n}$.

This conjecture lies at the intersection of elliptic PDE, phase transition theory, and geometric analysis, since the level sets behave asymptotically like minimal surfaces, connecting the conjecture to the classical Bernstein problem. Over the years, it has inspired a vast literature and has become fundamental to the qualitative analysis of nonlinear elliptic equations，  providing a natural bridge between
elliptic PDEs, minimal surface theory, and problems  in mathematical
physics.

\subsection{The system of Bose-Einstein condensates}

The present paper extends the De~Giorgi framework from single equations to a 
competitive elliptic system arising in two-component Bose-Einstein condensates (BECs). 
We consider positive solutions $(u,v)$ of
\begin{equation}\label{eq. main}
    \left\{\begin{aligned}
        \Delta u=u(u^{2}+v^{2}-1)+v(\alpha uv-\omega),\\
        \Delta v=v(u^{2}+v^{2}-1)+u(\alpha uv-\omega),
    \end{aligned}
    \right.\quad u,v>0.
\end{equation}

The system arises naturally in the physics of two-component Bose-Einstein condensates (BECs) exhibiting partial phase transition. It is derived from the Gross-Pitaevskii energy functional describing two-component condensates with both intra- and interspecies interactions. In contrast to the classical two-component segregated BEC models, the present system couples the two equations both linearly and nonlinearly, due to the spin coupling of the hyperfine states in addition to the intercomponent interaction. Here $u$ and $v$ represent the wave functions of the two components, $\alpha>0$ denotes the interaction parameter, and $\omega$ is the Rabi frequency corresponding to the one-body coupling between the two internal states. Such systems have attracted considerable attention in mathematical physics and PDE theory, since they exhibit rich phenomena such as phase separation, symmetry breaking, and interface dynamics \cite{AmbrosettiColorado2007,BaoCai2013, WeiWeth2010}.

In the one-dimensional case ($n=1$), the system has been analyzed in \cite{AftalionSourdis2019}, where the existence and asymptotic properties of domain wall solutions were established under different regimes of the parameter $a$. This analysis revealed the heteroclinic structure of the problem and its connection to the minimization of the Gross-Pitaevskii energy. In \cite{Fazly2021JDE,Fazly2013CVPDE}, the authors proved the one-dimensional symmetry of solutions to the system without a Rabi frequency in dimensions up to $3$. Such results provide a natural motivation to study the monotonicity and uniqueness of solutions in higher dimensions. From a physical perspective, understanding the qualitative properties of \eqref{eq. main} is also a first step towards the mathematical analysis of the vortex patterns appearing in Rabi-coupled condensates, in particular the so-called multidimer bound states \cite{AftalionMason2016,CiprianiNitta2013, KobayashiNitta2014}: these are molecular-type states formed by the binding of vortices of different components, which then interact to produce a variety of complex patterns, such as honeycomb, triangular, or square lattices. The possibility of non-segregated states at infinity with positive limits lies at the origin of such rich structures.

From the mathematical viewpoint, the system \eqref{eq. main} provides a natural framework for investigating qualitative properties of solutions. In particular, when $\alpha=2$ and $0\leq\omega<1$, it decouples into a single semilinear equation closely related to the Allen-Cahn equation, whose monotonicity and symmetry properties are central to the celebrated De Giorgi conjecture. Therefore, the study of \eqref{eq. main} can be regarded as a natural extension of the De Giorgi problem from single equations to coupled systems, thus bridging mathematical physics and geometric analysis.

\subsection{Classical theory in a special case \texorpdfstring{$\alpha=2$}{Lg}}
In the case $\alpha=2$, the system \eqref{eq. main} can be decoupled and reduced to the classical Allen-Cahn equation \eqref{eq:AC} (see  Corollary~\ref{cor. decouple corollary} for details). This could be viewed as a critical point of the classical Ginzburg-Landau energy:
\begin{equation}\label{eq. classical GL energy}
    J(u,\Omega)=\int_{\Omega}\Big\{\frac{|\nabla u|^{2}}{2}+W(u)\Big\}dx,\quad W(u)=\frac{1}{4}(1-u^{2})^{2}\chi_{[-1,1]}.
\end{equation}
The functional \eqref{eq. classical GL energy} was originally developed from the theory of Van der Waals (see \cite{Row79}) by Landau, Ginzburg, and Pitaevskii \cite{GP58,Landau37,Landau67} in order to describe phase transitions in thermodynamics.

It is immediate to observe that \eqref{eq. classical GL energy} admits two stable constant states, namely $u\equiv\pm 1$. A more interesting problem, however, is the study of phase transition solutions, which interpolate between values close to $-1$ and $1$ within a relatively narrow transition layer, referred to as the phase field region.

It is now well established that energy-minimizing phase transition solutions are closely connected to minimal surfaces. Indeed, consider the rescaled solution
 $u_{R}(y):=u(Ry)$, which is a minimizer of the rescaled Ginzburg-Landau energy functional:
\begin{equation*}
    J_{R}(u_{R},\Omega)=\int_{\Omega}\Big\{\frac{|\nabla u_{R}|^{2}}{2R}+RW(u_{R})\Big\}dy.
\end{equation*}
It was shown by Modica and Mortola in \cite{M79,MM80} (see also \cite{Bou90,OS91,SV12}) that as $R\to\infty$, the rescaled Ginzburg-Landau energy functional $\Gamma$-converges to the perimeter functional of Caccioppoli sets. As a result, a subsequence of the rescaled solutions $u_{R}$'s converges to a set characteristic function in the form
\begin{equation*}
    u_{\infty}=\chi_{E}-\chi_{E^{c}}
\end{equation*}
in the weak BV sense and in the strong $L^{1}_{loc}$ sense. Here, $E$ is a Caccioppoli set with minimal perimeter.

Equally important is the fact that
 the boundary of the limiting Caccioppoli set $E$ contains the origin. 
 This is a consequence of
 a density estimate result obtained by Caffarelli and C\'ordoba in \cite{CC95} (see also \cite{DFGV25a,DFGV25b,DFV18,FV08,PV05b,PV05a,SV14,SZ25,V04,Z25}), which states that the volume of the positive set and the negative set of $u(x)$ must be comparable. Combining the $\Gamma$-convergence and the density estimate  with the Bernstein theorem for minimal surfaces obtained in Simons \cite{S68}, one deduces that the zero set of a global energy minimizer $u(x)$ in $\mathbb{R}^{n}$ must be asymptotically flat, provided  that $n\leq7$.

For such a single equation, there have been a number of important results related to the classification of global solutions. In \cite{GG98}, Ghoussoub and Gui proved the De Giorgi conjecture in $\mathbb{R}^{2}$, and then Ambrosio and Cabr\'e \cite{AC00} showed the same result in $\mathbb{R}^{3}$.
In dimensions $\mathbb{R}^{4}-\mathbb{R}^{8}$, the validity of the De Giorgi conjecture remains open, except under additional assumptions.
 For example, Ghoussoub and Gui \cite{GG03} proved the flatness of solutions under an additional odd-symmetry assumption. Besides, in \cite{S09}, Savin proved the De Giorgi conjecture in $\mathbb{R}^{8}$, with the additional assumption that $u(x',x_{n})\to\pm1$ non-uniformly as $x_{n}\to\pm\infty$. In fact, the asymptotic flatness of the level set implies the flatness of a global solution. Later, the convergence assumption in \cite{S09} was replaced by the assumption that some level set of $u$ is an entire graph by Farina and Valdinoci in \cite{Farina2011TAMS}.

These classification results have also  been extended to more general settings. In \cite{SV05,VSS06}, as well as the previously mentioned \cite{Farina2011TAMS}, the authors established the one-dimensional symmetry for solutions to the $p$-Laplacian Allen-Cahn equation.
In \cite{CC12,DSV20,S18a,S18b}, the De Giorgi conjecture was verified for the fractional Allen-Cahn equation in various settings.

In higher dimensions, however, the situation is completely different.
In particular, del Pino, Kowalczyk, and Wei \cite{dPKW11} constructed a nontrivial global solution in $\mathbb{R}^{9}$.
Their construction was based on Simons' cone \cite{BdGG69,S68}, the most well-known non-flat stable minimal graph, which arises only in high dimensions.
In other words, problems of De Giorgi conjecture type typically face a dimensional obstruction. Thanks to the example in \cite{dPKW11}, one can easily construct a non-trivial solution to \eqref{eq. main} in $\mathbb{R}^{9}$, see Corollary~\ref{cor. decouple corollary}.

\subsection{Gibbons' conjecture: similarities and differences}
The so-called \emph{Gibbons conjecture} \cite{Carbou1995} asserts that bounded entire solutions $u$ of the Allen-Cahn equation~\eqref{eq:AC} in $\mathbb{R}^{n}$, which converge to $\pm 1$ as $x_{n} \to \pm \infty$, uniformly with respect to the other variables, must necessarily be one-dimensional. In other words, the asymptotic flatness of the level sets at infinity enforces global one-dimensional symmetry.

The introduction of the Gibbons conjecture is natural since  it provides an alternative formulation of the De Giorgi conjecture under stronger asymptotic boundary conditions. While the De Giorgi conjecture requires monotonicity in a given direction, the Gibbons conjecture assumes uniform limits at infinity, which are often easier to verify in applications to phase transitions and mathematical physics. In this sense, the two conjectures complement each other: the De Giorgi conjecture emphasizes internal monotonicity, whereas the Gibbons conjecture highlights the asymptotic behavior at infinity. Both have played a fundamental role in the development of the theory of symmetry and rigidity for nonlinear elliptic equations.

Unlike the De Giorgi conjecture, for which the critical dimension is $8$, the validity of the Gibbons conjecture has been established in all dimensions by independent methods due to Barlow, Bass and Gui \cite{BarlowBassGui2000}, Berestycki and Hamel \cite{BF2007}, Berestycki, Hamel and Monneau \cite{BerestyckiHamelMonneau2000}, and Farina \cite{Farina1999}. Farina's contribution \cite{Farina1999} was the first in this direction, and his work did not require the boundedness of the solution. For the fractional case, we refer interested readers to \cite{WC2022,WC2024}.

Analogous results for non-cooperative elliptic systems can be found in \cite{FarinaSciunziSoave2020,Le2023}.
As for the competitive system \eqref{eq. main} related to two-component Bose-Einstein condensates, its Gibbons' conjecture was proven recently in \cite{AFN21} by Aftalion, Farina, and Nguyen in all dimensions.

The uniform convergence assumption is crucial in Gibbons' conjecture. Violation of such an assumption, even the violation of uniformity (like the assumption in \cite{S09}), might result in the existence of non-trivial solutions. For example, the non-flat solution example in \cite{dPKW11} violates the uniform convergence assumptions in Gibbons' conjecture. We will also provide a non-trivial solution to the system \eqref{eq. main} in Corollary~\ref{cor. decouple corollary} below, whose convergence is also non-uniform.

\subsection{Other comments}
We plan to study the De Giorgi conjecture for a nonlocal version of \eqref{eq. main}, involving the operator $(-\Delta)^{s}$, in the second paper of this series. We conjecture that the De Giorgi property for the two-component Bose-Einstein system holds in $\mathbb{R}^{3}$ when $s \ge \frac{1}{2}$, and in $\mathbb{R}^{2}$ when $s < \frac{1}{2}$.

To the best of our knowledge, no density estimates or $\Gamma$-convergence results have yet been established for systems analogous to \eqref{eq. main}. Nevertheless, we anticipate that versions of the classical density estimate and $\Gamma$-convergence results could be developed for \eqref{eq. main}, thereby implying the asymptotic flatness of the phase field region in $\mathbb{R}^{8}$.

\section{Main results and key ideas}
\begin{figure}[htbp]
    \centering
    \begin{tikzpicture}[scale=5,>=latex]
      \draw[->,thick] (0,0) -- (1.8,0) node[font=\Large][right] {$x$};
      \draw[->,thick] (0,0) -- (0,1.6) node[font=\Large][right] {$y$};
       \draw[thick] (1,0) -- (1,0.01) node[below] {$1$};
\draw[thick] (0, 1) -- (0.01,1) node[left] {$1$};
      \node[below left] at (0,0) {$0$};

      \pgfmathsetmacro{\aone}{sqrt(0.5 - sqrt(3)/4)}
      \pgfmathsetmacro{\bone}{1/(4*\aone)}
      \pgfmathsetmacro{\atwo}{sqrt(0.5 + sqrt(3)/4)}
      \pgfmathsetmacro{\btwo}{1/(4*\atwo)}

\draw[white, dashed] plot[domain=0:\atwo, variable=\x, smooth] ({\x},{sqrt(1-\x*\x)});
\draw[white, dashed] plot[domain=\aone:\atwo, variable=\x, smooth] ({\x},{1/(4*\x)});
      \begin{scope}
        \clip plot[domain=0:\atwo, variable=\x, smooth] ({\x},{sqrt(1-\x*\x)})
             -- plot[domain=\atwo:\aone, variable=\x, smooth] ({\x},{1/(4*\x)}) -- cycle;
        \fill[pattern=north east lines, pattern color=gray!40] (0,0) rectangle (1.6,1.6);
      \end{scope}

      \draw[dashed, thick, domain=0:1, variable=\x, smooth]
        plot ({\x},{sqrt(1-\x*\x)}) node[font=\large][left] at (-0.05, 1.1) {$x^2+y^2=1$};

      \draw[dashed, red, thick, domain=0.17:1.5, variable=\x, smooth]
        plot ({\x},{1/(4*\x)}) node[font=\large][right] {$xy=\tfrac{\omega}{\alpha}$};

      \filldraw[blue] (\aone,\bone) circle(0.6pt) node[font=\large][above right] {$(a,b)$};
      \filldraw[blue] (\atwo,\btwo) circle(0.6pt) node[font=\large][above right] {$(b,a)$};

      \filldraw[green!80!black] (0.67,0.67) circle(0.6pt);
      \node[green, font=\fontsize{12}{12}\selectfont] at (1,0.75) 
      {$\Bigl(\sqrt{\tfrac{1+\omega}{2+\alpha}},\,\sqrt{\tfrac{1+\omega}{2+\alpha}}\Bigr)$};

    \end{tikzpicture}
\caption{Steady states $(a, b), (b, a)$ and  $\Bigl(\sqrt{\tfrac{1+\omega}{2+\alpha}},\,\sqrt{\tfrac{1+\omega}{2+\alpha}}\Bigr)$.}\label{fig. range}
\end{figure}
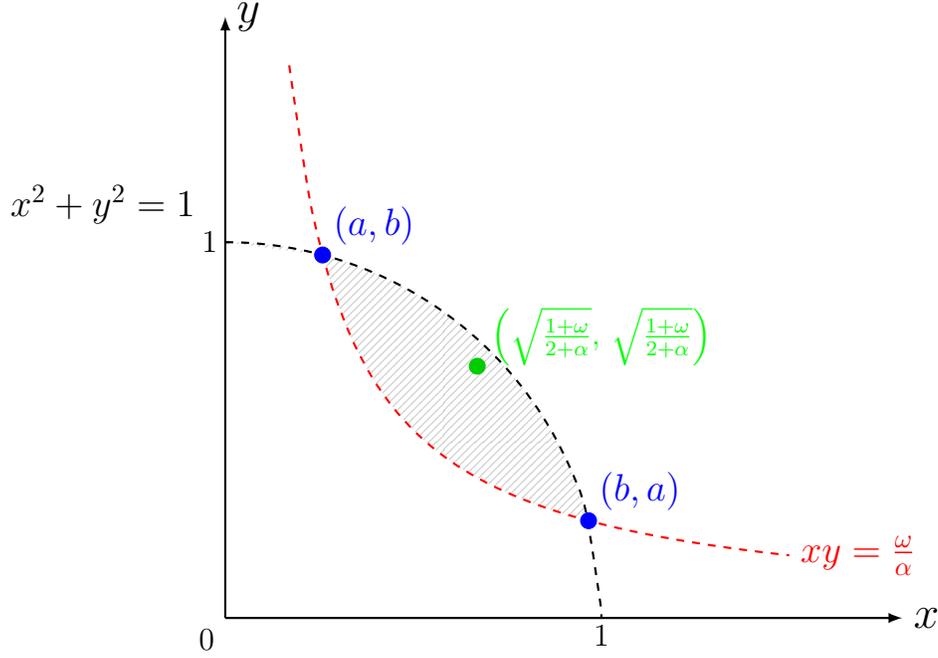

In this section, we present the main results of this paper. We begin by introducing some notational conventions. Let $(u,v)$ be an entire positive solution to \eqref{eq. main} in $\mathbb{R}^{n}$, with $0 < \omega < \frac{\alpha}{2}$. As shown in \cite[Proposition 2.1]{AFN21}, the range of $(u,v)$ lies within the region bounded by the circle $x^{2}+y^{2}=1$ and the hyperbola $xy = \frac{\omega}{\alpha}$, corresponding to the shaded area in Figure~\ref{fig. range}.

One can verify that the system \eqref{eq. main} admits three distinct positive steady states, i.e., constant solutions $(u,v)$ of \eqref{eq. main}. Specifically, let $0 < a < b$ be such that $(a,b)$ and $(b,a)$ are the intersections of $x^{2}+y^{2}=1$ and $xy = \frac{\omega}{\alpha}$ in the first quadrant (see Figure~\ref{fig. range}). Then $(a,b)$ and $(b,a)$ correspond to the two stable steady states of the system. The third steady state is $(\sqrt{\tfrac{1+\omega}{2+\alpha}}, \sqrt{\tfrac{1+\omega}{2+\alpha}})$, which should be regarded as unstable in view of the energy functional \eqref{eq. Ginzburg-Landau energy, s=1}, to be discussed later.

We first focus on the special case $\alpha = 2$ and present a classification result. This result is more of an observation than a full theorem, as it relies on the simple fact that the system \eqref{eq. main} decouples when $\alpha = 2$. By applying well-known classification results for a single equation, we obtain the following corollary.

\begin{corollary}\label{cor. decouple corollary}
    Let $\alpha=2$ and $0<\omega<1$. Then all global positive solutions to \eqref{eq. main}  satisfy $u(x)+v(x)\equiv\sqrt{1+\omega}$. Moreover, let $0\leq a<b$ be  constants satisfying $a^{2}+b^{2}=1$ and $\displaystyle ab=\frac{\omega}{2}$. Assume that $\frac{\partial u}{\partial x_{n}}>0>\frac{\partial v}{\partial x_{n}}$, and  that the following convergence holds (not uniform in $x'$):
    \begin{equation*}
        \bigl(u(x',x_{n}),v(x',x_{n})\bigr)\to\left\{\begin{aligned}
            &(a,b),&\mbox{as }&x_{n}\to-\infty,\\
            &(b,a),&\mbox{as }&x_{n}\to+\infty,
        \end{aligned}\right.
    \end{equation*}
    then we have the following results:
    \begin{itemize}
        \item If $n\leq8$, then the pair $(u,v)$ is one-dimensional.
        \item If $n\geq9$, then there exists a pair $(u,v)$ that  is not one-dimensional.
    \end{itemize}
\end{corollary}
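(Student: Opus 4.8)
The plan is to \emph{decouple} the system. Since $\alpha=2$ gives $u^{2}+v^{2}\pm 2uv=(u\pm v)^{2}$, adding and subtracting the two equations in \eqref{eq. main} shows that $s:=u+v$ and $d:=u-v$ solve the uncoupled, rescaled Allen--Cahn equations
\begin{equation*}
    \Delta s=s\bigl(s^{2}-(1+\omega)\bigr),\qquad \Delta d=d\bigl(d^{2}-(1-\omega)\bigr)\quad\text{in }\mathbb{R}^{n}.
\end{equation*}
The identity $u+v\equiv\sqrt{1+\omega}$ is exactly the statement that $s$ is the constant $\sqrt{1+\omega}$; once this is known, $u=\tfrac12(\sqrt{1+\omega}+d)$, $v=\tfrac12(\sqrt{1+\omega}-d)$, and the classification reduces to the one-dimensional symmetry theory for the single equation satisfied by $d$.

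For the rigidity of $s$ I would first record the a priori bounds of \cite[Proposition~2.1]{AFN21}: every entire positive solution of \eqref{eq. main} with $\alpha=2$, $0<\omega<1$ satisfies $u^{2}+v^{2}\le 1$ and $uv\ge\omega/2$, hence $\sqrt{2\omega}\le s\le\sqrt 2$; since $\omega<1$, the value $\sqrt{1+\omega}$ lies strictly inside $(\sqrt{2\omega},\sqrt 2)$, and, crucially, $\inf_{\mathbb{R}^{n}}s\ge\sqrt{2\omega}>0$. A standard sliding/translation argument then closes it: picking $x_{k}$ with $s(x_{k})\to\sup s$, the translates $s(\,\cdot\,+x_{k})$ subconverge in $C^{2}_{\mathrm{loc}}$ (interior elliptic estimates, $s$ being bounded) to a solution of the same equation attaining its maximum $\sup s$ at the origin, so $0\ge\Delta(\,\cdot\,)(0)=\sup s\,\bigl((\sup s)^{2}-(1+\omega)\bigr)$ forces $\sup s\le\sqrt{1+\omega}$; the mirror argument at a minimizing sequence, where one may divide by $\inf s>0$, gives $\inf s\ge\sqrt{1+\omega}$. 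Hence $s\equiv\sqrt{1+\omega}$.

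With $u+v\equiv\sqrt{1+\omega}$ in hand, a one-line computation (using $u^{2}+v^{2}-1=\omega-2uv$ and $2uv-\omega=\tfrac12\bigl((1-\omega)-d^{2}\bigr)$) shows that \eqref{eq. main} with $\alpha=2$ is \emph{equivalent} to $\Delta d=d\bigl(d^{2}-(1-\omega)\bigr)$ together with that constraint; moreover $\partial_{x_{n}}u>0>\partial_{x_{n}}v\Longleftrightarrow\partial_{x_{n}}d>0$, and, since $(a-b)^{2}=1-\omega$, the prescribed limits $(u,v)\to(a,b),(b,a)$ at $x_{n}=\mp\infty$ become $d\to\mp\sqrt{1-\omega}$ pointwise in $x'$. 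After the rescaling $\tilde\psi(y):=(1-\omega)^{-1/2}d\bigl((1-\omega)^{-1/2}y\bigr)$ this says: $\tilde\psi$ is a bounded monotone ($\partial_{y_{n}}\tilde\psi>0$) solution of $\Delta\tilde\psi=\tilde\psi^{3}-\tilde\psi$ in $\mathbb{R}^{n}$ with $\tilde\psi(y',y_{n})\to\pm 1$ as $y_{n}\to\pm\infty$ for each $y'$, i.e. the level set $\{u=v\}$ is an entire graph over $\mathbb{R}^{n-1}$. For $n\le 8$ I would then invoke directly the De Giorgi conjecture --- Ghoussoub--Gui \cite{GG98} for $n=2$ and Ambrosio--Cabr\'e \cite{AC00} for $n=3$ --- and Savin's theorem \cite{S09} for $4\le n\le 8$, whose ``entire graph'' hypothesis is precisely the limit just established; each gives that $\tilde\psi$, hence $(u,v)$, is one-dimensional. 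For $n\ge 9$ I would instead \emph{build} a non-symmetric example: take the del Pino--Kowalczyk--Wei solution $\tilde\psi_{0}$ on $\mathbb{R}^{9}$ \cite{dPKW11} (bounded, $|\tilde\psi_{0}|<1$, monotone in its last variable, with non-uniform pointwise limits $\pm 1$, and not one-dimensional), set $d(x):=\sqrt{1-\omega}\,\tilde\psi_{0}\bigl(\sqrt{1-\omega}\,(x_{1},\dots,x_{8}),\,\sqrt{1-\omega}\,x_{n}\bigr)$ on $\mathbb{R}^{n}$ (the coordinates $x_{9},\dots,x_{n-1}$ being silent), and $u:=\tfrac12(\sqrt{1+\omega}+d)$, $v:=\tfrac12(\sqrt{1+\omega}-d)$. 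Then $|d|<\sqrt{1-\omega}<\sqrt{1+\omega}$ yields $u,v>0$; running the algebra of this paragraph backwards shows $(u,v)$ solves \eqref{eq. main} with $\alpha=2$; monotonicity and the stated non-uniform convergence are inherited from $\tilde\psi_{0}$; and $(u,v)$ is not one-dimensional because $\tilde\psi_{0}$ is not.

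The only genuinely non-routine ingredient is the rigidity $u+v\equiv\sqrt{1+\omega}$, and there the delicate point is securing $\inf_{\mathbb{R}^n} s>0$: this uses $\omega>0$ in an essential way and is exactly where \cite[Proposition~2.1]{AFN21} is indispensable (without it, the maximum-principle comparison at the infimum yields no information, and $s\equiv 0$ type behaviour would not be a priori excluded). Everything else is either the elementary algebra of the substitution $(u,v)\mapsto(u+v,\,u-v)$ or a direct appeal to the cited one-dimensional symmetry results and counterexample --- which is why, as the authors remark, the corollary is more an observation than a theorem.
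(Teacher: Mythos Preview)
Your proof is correct and follows the same overall outline as the paper's: decouple via $s=u+v$, $d=u-v$, show $s\equiv\sqrt{1+\omega}$, then reduce to the scalar Allen--Cahn theory for $d$ and invoke \cite{GG98,AC00,S09,dPKW11}. Your algebra for the decoupling, the construction of the counterexample for $n\ge 9$, and the reduction to Savin's ``entire graph'' hypothesis are all handled just as in the paper.

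The one genuine methodological difference is in the proof that $s\equiv\sqrt{1+\omega}$. The paper writes $\mathcal{S}=s-\sqrt{1+\omega}$, observes $\Delta\mathcal{S}=c(\mathcal{S})\mathcal{S}$ with $c(\mathcal{S})\ge 2\omega>0$ (this is where \cite[Proposition~2.1]{AFN21} enters for them), and then runs a differential-inequality argument on $\mathcal{E}(R)=\int_{B_R}(|\nabla\mathcal{S}|^2+2\omega\mathcal{S}^2)$ to force exponential growth unless $\mathcal{S}\equiv 0$. You instead take translates at approximate extrema and apply the maximum principle at the limiting interior extremum, using $\inf s\ge\sqrt{2\omega}>0$ (your use of \cite{AFN21}) to make the argument at the infimum non-vacuous. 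Both are standard Liouville techniques for equations of the form $\Delta s=s(s^2-c)$; your compactness/maximum-principle route is arguably the more elementary of the two and avoids the integral estimate, while the paper's energy argument is slightly more self-contained in that it does not pass to a limiting solution. Either way, as you correctly identify, the positivity input from \cite{AFN21} (whether phrased as $c(\mathcal S)>0$ or as $\inf s>0$) is the indispensable step.
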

\begin{remark}
    In Farina-Sciunzi-Soave \cite{FarinaSciunziSoave2020}, the authors have already realized that the system \eqref{eq. main} can be decoupled in the case $\omega=0$, and they have obtained a similar result as in Corollary~\ref{cor. decouple corollary}.
\end{remark}

Let us state our main theorem, which is a Liouville-type result of De Giorgi type in $\mathbb{R}^{3}$.

\begin{theorem}\label{thm. main theorem}
Let $(\alpha,\omega)$ be a pair of parameters satisfying $0 < \omega < \frac{\alpha}{2}$. Assume that $(u,v)$ is a pair of positive functions satisfying \eqref{eq. main} in $\mathbb{R}^{3}$. If
\begin{equation*}
    \frac{\partial u}{\partial x_3} > 0 > \frac{\partial v}{\partial x_3} \quad \text{in } \mathbb{R}^{3},
\end{equation*}
then the pair $(u,v)$ is one-dimensional. Precisely, there exists a unit vector $\vec{\nu} \in \mathbb{R}^{3}$ and a pair of positive functions $(U(t),V(t))$ satisfying \eqref{eq. main} in $\mathbb{R}$ such that
\begin{equation*}
    (u(x),v(x)) \equiv (U(x \cdot \vec{\nu}), V(x \cdot \vec{\nu})), \quad \text{with } U'(t) > 0 > V'(t) \text{ in } \mathbb{R}.
\end{equation*}
Moreover, let $0 < a < b$ be constants such that $a^2 + b^2 = 1$ and $ab = \frac{\omega}{\alpha}$ (see Figure~\ref{fig. range}). Then
\begin{equation*}
    \lim_{t \to -\infty} (U(t), V(t)) = (a,b), \quad
    \lim_{t \to +\infty} (U(t), V(t)) = (b,a).
\end{equation*}
\end{theorem}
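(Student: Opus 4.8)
The plan is to adapt the Ambrosio--Cabr\'e approach to the scalar De Giorgi conjecture in $\mathbb{R}^{3}$ to the system \eqref{eq. main}, exploiting the sign structure produced by the Rabi coupling. Write the system as $\Delta u=\partial_{u}W$, $\Delta v=\partial_{v}W$ with $W(u,v)=\tfrac14(u^{2}+v^{2}-1)^{2}+\tfrac{1}{2\alpha}(\alpha uv-\omega)^{2}$ up to an additive constant, so that $W$ attains its global minimum exactly at $(a,b)$ and $(b,a)$. By \cite[Proposition 2.1]{AFN21} the solution takes values in the closed lens of Figure~\ref{fig. range}, on which $uv\ge\omega/\alpha$; hence $(u,v)$ is bounded, smooth with bounded derivatives, and
\[
\partial_{u}\partial_{v}W=2(1+\alpha)uv-\omega\ \ge\ \tfrac{2+\alpha}{\alpha}\,\omega\ >\ 0 .
\]
Two consequences of this single inequality drive everything: after the reflection $v\mapsto-v$ the linearized operator becomes cooperative, and the sliding family $\mathbf u^{t}(x):=\bigl(u(x',x_{3}+t),v(x',x_{3}+t)\bigr)$ --- which, by the hypothesis $\partial_{x_{3}}u>0>\partial_{x_{3}}v$, is strictly monotone in the ``twisted'' order ($u$ increasing, $v$ decreasing) --- is an ordered foliation by solutions.

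I would first establish two preparatory facts. Differentiating \eqref{eq. main} in $x_{3}$ shows that $(u_{3},v_{3}):=(\partial_{x_{3}}u,\partial_{x_{3}}v)$ solves the linearized system; testing the second variation of the Ginzburg--Landau energy with pairs $(u_{3}\xi,v_{3}\zeta)$ and integrating by parts against this linearized equation gives
\[
\int \Bigl(u_{3}^{2}|\nabla\xi|^{2}+v_{3}^{2}|\nabla\zeta|^{2}-\partial_{u}\partial_{v}W\,u_{3}v_{3}\,(\xi-\zeta)^{2}\Bigr)\,dx\ \ge\ 0 ,
\]
which is nonnegative term by term since $u_{3}v_{3}<0$ and $\partial_{u}\partial_{v}W>0$; as $u_{3}>0$, $v_{3}<0$ everywhere, every compactly supported pair is of this form, so $(u,v)$ is a stable solution. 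The second, and heavier, ingredient is the sharp energy estimate $\int_{B_{R}}(|\nabla u|^{2}+|\nabla v|^{2})\le CR^{2}$ for $R\ge2$. Its vertical part is elementary, $\int_{-R}^{R}u_{3}^{2}\,dx_{3}\le\|u_{3}\|_{\infty}\bigl(u(x',R)-u(x',-R)\bigr)\le C$ uniformly in $x'$ and likewise for $v_{3}$; for the full gradient one uses that, by the ordered foliation $\{\mathbf u^{t}\}$ and the cooperative structure, $(u,v)$ is a local minimizer of the energy (cf.\ \cite{Fazly2013CVPDE,Fazly2021JDE}), and then the usual competitor --- a constant steady state inside $B_{R-1}$ interpolated across a unit annulus --- yields $\int_{B_{R}}\bigl(\tfrac12|\nabla u|^{2}+\tfrac12|\nabla v|^{2}+W(u,v)-\min W\bigr)\le CR^{2}$. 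I expect this energy estimate to be the main technical obstacle: establishing the local minimality of monotone solutions by a calibration/sliding argument compatible with the twisted cooperative structure, and running the competitor argument compatibly with the range foliated by $\{\mathbf u^{t}\}$. (The trivial bound $CR^{3}$ is useless, which is exactly why the statement is confined to $\mathbb{R}^{3}$.) Everything else is clean once $\partial_{u}\partial_{v}W>0$ is in hand.

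Next comes the Liouville step. For $i=1,2$ put $\xi_{i}:=u_{i}/u_{3}$ and $\zeta_{i}:=v_{i}/v_{3}$, which are well defined and smooth since $u_{3}>0$, $v_{3}<0$. As $(u_{i},v_{i})$ also solves the linearized system, a direct computation using $u_{i}=\xi_{i}u_{3}$, $v_{i}=\zeta_{i}v_{3}$ gives the coupled divergence-form identities
\[
\operatorname{div}\bigl(u_{3}^{2}\nabla\xi_{i}\bigr)=u_{3}v_{3}\,\partial_{u}\partial_{v}W\,(\zeta_{i}-\xi_{i}),\qquad
\operatorname{div}\bigl(v_{3}^{2}\nabla\zeta_{i}\bigr)=u_{3}v_{3}\,\partial_{u}\partial_{v}W\,(\xi_{i}-\zeta_{i}).
\]
Testing the first with $\xi_{i}\eta^{2}$ and the second with $\zeta_{i}\eta^{2}$, adding, and using $u_{3}v_{3}\,\partial_{u}\partial_{v}W<0$ yields, after a Cauchy--Schwarz absorption,
\[
\tfrac12\!\int u_{3}^{2}|\nabla\xi_{i}|^{2}\eta^{2}+\tfrac12\!\int v_{3}^{2}|\nabla\zeta_{i}|^{2}\eta^{2}+\!\int(-u_{3}v_{3}\,\partial_{u}\partial_{v}W)(\xi_{i}-\zeta_{i})^{2}\eta^{2}\ \le\ 2\!\int (u_{i}^{2}+v_{i}^{2})|\nabla\eta|^{2}.
\]
Taking $\eta$ the logarithmic cutoff ($\eta\equiv1$ on $B_{\sqrt R}$, $\operatorname{supp}\eta\subset B_{R}$, $|\nabla\eta|\lesssim(|x|\log R)^{-1}$) and invoking the energy estimate, the right-hand side is $O\bigl((\log R)^{-1}\bigr)\to0$ --- this is exactly where the dimension $n=3$ enters. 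Hence $\nabla\xi_{i}\equiv\nabla\zeta_{i}\equiv0$ and $\xi_{i}\equiv\zeta_{i}$, so $\xi_{i}=\zeta_{i}=:\sigma_{i}$ is constant. Therefore $\nabla u=u_{3}(\sigma_{1},\sigma_{2},1)$ and $\nabla v=v_{3}(\sigma_{1},\sigma_{2},1)$, which forces $(u(x),v(x))\equiv(U,V)(x\cdot\vec\nu)$ with $\vec\nu=(\sigma_{1},\sigma_{2},1)/|(\sigma_{1},\sigma_{2},1)|$, and $U'>0>V'$ by hypothesis.

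It remains to identify the limits. Since $U$ and $V$ are bounded and strictly monotone, they converge as $t\to\pm\infty$ to steady states of \eqref{eq. main} in $\mathbb{R}$, i.e.\ to one of $(a,b)$, $(b,a)$, $\bigl(\sqrt{(1+\omega)/(2+\alpha)},\sqrt{(1+\omega)/(2+\alpha)}\bigr)$. The diagonal state has $W>\min W$, so a one-dimensional profile converging to it at either end would satisfy $\int_{B_{R}}(W(u,v)-\min W)\,dx\ge cR^{3}$ for large $R$, contradicting the $CR^{2}$ energy estimate; hence both limits lie in $\{(a,b),(b,a)\}$, and $U'>0>V'$ forces $\lim_{t\to-\infty}(U,V)=(a,b)$ and $\lim_{t\to+\infty}(U,V)=(b,a)$, completing the proof.
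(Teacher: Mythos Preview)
Your Liouville step is essentially the paper's: the divergence-form identities for $\xi_{i}=u_{i}/u_{3}$, $\zeta_{i}=v_{i}/v_{3}$, the coupling term $-u_{3}v_{3}W_{uv}(\xi_{i}-\zeta_{i})^{2}$ with the correct sign thanks to $W_{uv}>0$ and $u_{3}v_{3}<0$, and the logarithmic cutoff feeding on $\int_{B_{R}}|\nabla u|^{2}+|\nabla v|^{2}\le CR^{2}$. Your identification of the limits via $W(\text{diagonal})>\min W$ and the $CR^{2}$ energy bound is also clean.

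The gap is exactly where you locate it: the energy estimate. You propose to obtain it from local minimality via the ordered foliation $\{(u^{t},v^{t})\}$, but for a genuine system this is not known and is not obvious. The Ambrosio--Cabr\'e calibration works in the scalar case because any competitor $w$ with the same boundary values can be compared to the foliation fibre by fibre; for a vector competitor $(\tilde u,\tilde v)$ there is no reason its values should lie on the one-dimensional curve $t\mapsto(u^{t}(x),v^{t}(x))$, so the fibrewise Weierstrass excess argument collapses. The references you cite (Fazly--Ghoussoub) do not supply a minimality statement of this kind, and I do not know one in the literature for this twisted-cooperative structure. So as written the proposal does not close.

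The paper circumvents minimality entirely. Set $E(t)=J(u,v,[-R,R]^{2}\times[t-R,t+R])$; integrating by parts against the equation reduces $E'(t)$ to a boundary integral of $u_{3}\partial_{\nu}u+v_{3}\partial_{\nu}v$, and since $\int_{\mathbb R}|u_{3}|\,dx_{3}$, $\int_{\mathbb R}|v_{3}|\,dx_{3}$ are uniformly bounded by monotonicity, one gets $\int_{\mathbb R}|E'(t)|\,dt\le CR^{2}$. It therefore suffices to bound the energy of one limiting profile $(\overline u,\overline v):=\lim_{x_{3}\to+\infty}(u,v)$ in $\mathbb R^{2}$. The paper shows $(\overline u,\overline v)$ is a \emph{stable} solution in $\mathbb R^{2}$ (your second-variation inequality passed to the limit, after a cutoff in $x_{3}$), then constructs a sign-definite pair $(\varphi,\psi)$ with $\varphi>0>\psi$ from the first Dirichlet eigenpairs of the linearized operator on large balls (using $W_{uv}>0$ to fix signs, Harnack for the difference $\varphi_{R}-\psi_{R}$, and the strong maximum principle to rule out interior zeros). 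With such a pair in hand, the \emph{two}-dimensional Liouville argument needs only the trivial bound $\int_{B_{R}'}|\nabla\overline u|^{2}+|\nabla\overline v|^{2}\le CR^{2}$, and yields that $(\overline u,\overline v)$ is one-dimensional, hence has energy $O(R)$ in $B_{R}'$. This closes the loop without any minimality claim; the dimensional reduction $3\to 2\to 1$ is doing the work that your calibration was meant to do.
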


Our method is inspired by Ambrosio and Cabr\'e \cite{AC00}, and we focus on proving that the slopes of the level sets of $(u,v)$ are constants. This method is also applied to the Gross-Pitaevskii system without the Rabi frequency by Fazly and Ghoussoub in \cite{Fazly2021JDE,Fazly2013CVPDE}. Specifically, let $(\xi_i, \eta_i)$ denote the directional derivatives of $(u,v)$ in the $\vec{e}_i$-direction; these satisfy the linearized equation of \eqref{eq. main}. Since we assume that 
\[
\frac{\partial u}{\partial x_3} > 0 > \frac{\partial v}{\partial x_3},
\]
it follows that $(\varphi, \psi) := (\xi_3, \eta_3)$ is a nowhere-zero pair of functions. By the implicit function theorem, the quantities
\[
(\sigma_i, \tau_i) := \left( \frac{\xi_i}{\varphi}, \frac{\eta_i}{\psi} \right)
\]
represent the slopes of the level sets of $(u,v)$. By deriving a divergence-form equation for $(\sigma_i, \tau_i)$ (see Lemma~\ref{lem. negative definite form}) and applying the logarithmic cut-off trick, we show that $(\sigma_i, \tau_i)$ are constants satisfying $\sigma_i = \tau_i$, thereby establishing that $(u,v)$ is one-dimensional.

Compared with \cite{AC00}, where the authors considered a single equation, the main difficulty here arises from the coupling between the two components of \eqref{eq. main}. Fortunately, using the range estimate from \cite{AFN21}, our divergence-form equation for $(\sigma_i, \tau_i)$ in Lemma~\ref{lem. negative definite form} takes a negative definite form:
\[
-\mathcal{P}(x) (\sigma_i - \tau_i)^2,
\]
where $\mathcal{P}(x)$ is a strictly positive function. Consequently, not only are $\sigma_i$ and $\tau_i$ constant, but these constants must also be equal.

We also note that \eqref{eq. main} possesses a variational structure. To be precise, let us define an infinitesimal potential energy by
\begin{equation}\label{eq. double well infinitesimal potential energy}
    W(u,v)=\frac{1}{4}(1-u^{2}-v^{2})^{2}+\frac{1}{2\alpha}(\omega-\alpha uv)^{2}.
\end{equation}
It follows that for such an infinitesimal potential energy \eqref{eq. double well infinitesimal potential energy}, we have
\begin{equation*}
    u(u^{2}+v^{2}-1)+v(\alpha uv-\omega)=\frac{\partial W}{\partial u}(u,v),\quad v(u^{2}+v^{2}-1)+u(\alpha uv-\omega)=\frac{\partial W}{\partial v}(u,v).
\end{equation*}
In other words, \eqref{eq. main} can be rewritten as
\begin{equation}\label{eq. simplified B-E}
    \Delta u=W_{u}(u,v),\quad\Delta v=W_{v}(u,v).
\end{equation}

It is therefore natural to define a coupled Ginzburg-Landau energy functional similar to \eqref{eq. classical GL energy}. Let $\Omega$ be an arbitrary bounded open domain in $\mathbb{R}^{n}$, and define
\begin{equation}\label{eq. Ginzburg-Landau energy, s=1}
    J(u,v,\Omega)=\int_{\Omega}\Big\{\frac{|\nabla u|^{2}+|\nabla v|^{2}}{2}+W(u,v)\Big\}dx.
\end{equation}
It is readily seen that \eqref{eq. simplified B-E} is precisely the Euler-Lagrange equation of the Ginzburg-Landau energy \eqref{eq. Ginzburg-Landau energy, s=1}. In this paper,  the estimate of this Ginzburg-Landau energy \eqref{eq. Ginzburg-Landau energy, s=1} will play a crucial role in establishing the one-dimensional symmetry of $(u,v)$.

The estimate of the Ginzburg-Landau energy proceed in two main steps. 
In the first step, we consider a cube of radius $R$ and translate it to infinity along the $\vec{e}_3$ direction, estimating the change in energy during this process.
 The second step is to analyze the limiting profile as the solution $(u,v)$ is  translated along the $\vec{e}_{3}$ direction to infinity. 
  Since both $u$ and $v$ are assumed monotone in the $\vec{e}_3$ direction, the $\vec{e}_3$-translation of $(u,v)$ converges to a limiting profile $(\overline{u},\overline{v})$ or $(\underline{u},\underline{v})$ at $\pm\infty$, which are stable solutions of \eqref{eq. main} in the lower-dimensional space $\mathbb{R}^2$. 
We emphasize that, when constructing a suitable nowhere-zero pair $(\varphi,\psi)$, we again utilize the range estimate from \cite[Proposition 2.1]{AFN21} and apply the strong maximum principle for linear elliptic systems, thereby ensuring that $(\varphi,\psi)$ is nowhere zero.

We conclude this section with a brief proof of Corollary~\ref{cor. decouple corollary}.

\begin{proof}[Proof of Corollary~\ref{cor. decouple corollary}]
    Let $\mathcal{S}=(u+v-\sqrt{1+\omega})$. By adding the two lines of \eqref{eq. main}, we obtain
    \begin{equation}\label{eq. S equation, to decouple}
        \Delta\mathcal{S}=(\mathcal{S}+\sqrt{1+\omega})(\mathcal{S}+2\sqrt{1+\omega})S=:c(\mathcal{S})\mathcal{S}.
    \end{equation}
   From  \cite[Proposition 1.2]{AFN21}, we know that
    $$u^{2}+v^{2}\leq1 \quad \mbox{and} \quad \displaystyle uv\geq\frac{\omega}{\alpha}=\frac{\omega}{2},$$
   which implies  that $c(\mathcal{S})\geq2\omega+\sqrt{2\omega+2\omega^{2}}>0$. We claim that $\mathcal{S}\equiv0$. To see this, suppose by contradiction  that $\mathcal{S} \not\equiv 0$, and consider
    \begin{equation*}
        \mathcal{E}(R):=\int_{B_{R}}\Big\{|\nabla\mathcal{S}|^{2}+2\omega\mathcal{S}^{2}\Big\}dx=\int_{B_{R}}\Big\{\Delta\frac{\mathcal{S}^{2}}{2}-\mathcal{S}\Delta\mathcal{S}+2\omega\mathcal{S}^{2}\Big\}dx,
    \end{equation*}
    which is strictly positive for a sufficiently large $R$. Since $c(\mathcal{S})>2\omega$ in \eqref{eq. S equation, to decouple}, we have
    \begin{equation*}
        \mathcal{E}(R)\leq\int_{B_{R}}\Delta\frac{\mathcal{S}^{2}}{2}dx=\int_{\partial B_{R}}\mathcal{S}\cdot\partial_{r}\mathcal{S}d\mathcal{H}^{n-1}_{\partial B_{R}}\leq\int_{\partial B_{R}}\frac{\mathcal{S}^{2}+|\nabla\mathcal{S}|^{2}}{2}d\mathcal{H}^{n-1}_{\partial B_{R}}.
    \end{equation*}
    In other words, set $\lambda:=\min\{2,4\omega\}$, then we have
    \begin{equation*}
        \frac{d}{dr}\mathcal{E}(R)\geq\lambda\mathcal{E}(R).
    \end{equation*}
    By the Caccioppoli inequality, we see that $\|\mathcal{S}\|_{L^{\infty}(B_{R})}$ must grow at least exponentially in $R$, thus violating the facts $u^{2}+v^{2}\leq1$ and $\displaystyle uv\geq\frac{\omega}{2}$. This means that $\mathcal{S}\equiv0$.

    Then, define the difference:
    \begin{equation*}
        \mathcal{D}(x)=\frac{u(x)-v(x)}{b-a}.
    \end{equation*}
    Subtracting the two lines of \eqref{eq. main}, we obtain 
    \begin{equation*}
        \Delta\mathcal{D}=(1-\omega)(\mathcal{D}^{3}-\mathcal{D}),
    \end{equation*}
       where we  used the fact that $(b-a)^2=(a^{2}+b^{2})-2ab=1-\omega.$
    After an appropriate scaling, this is precisely  the classical Allen-Cahn equation \eqref{eq:AC}. 
     The one-dimensional symmetry in dimensions $n \leq 8$ follows from \cite{S09}, while the non-flat example in dimensions $n \geq 9$ follows from the construction in \cite{dPKW11}.
\end{proof}

One of the key innovations of this paper is the combination of energy analysis with De Giorgi-type classification results, providing a deep understanding of the structure and stability of solutions in the coupled two-component Bose-Einstein condensate system. In particular, we introduce a novel method of proving the one-dimensional symmetry of solutions by constructing appropriate directional derivative symmetries and utilizing range estimates. This approach is crucial in analyzing the asymptotic behavior of the solutions and contributes to a deeper understanding of the system's solution structure, especially in higher-dimensional settings. 

Another key contribution lies in the introduction of an energy functional analysis, which serves as a powerful tool for handling the system's stability and structural behavior. This approach not only provides a new perspective on the classical Ginzburg-Landau functional but also reveals the existence of nontrivial solutions in higher dimensions. Additionally, the classification of boundary conditions and steady-state solutions, which was previously underexplored in the literature, has been thoroughly analyzed in this study. Through detailed analysis of steady-state solution structures and convergence behavior, we unveil multiple solution structures under different parameter regimes, extending the classical Liouville-type theory to coupled systems.

From a methodological perspective, although De Giorgi-type classification results have been well-developed for single equations, handling coupled systems presents significant challenges due to the nonlinearity and interaction between components. Our breakthrough lies in combining symmetry-breaking and asymptotic analysis methods, which allow us to analyze the behavior of the solutions and prove their one-dimensional symmetry in $\mathbb{R}^3$. This method is particularly valuable for systems with complex interactions, such as Bose-Einstein condensates, and extends the applicability of De Giorgi-type classification results to higher-dimensional multi-component systems.

This work not only provides a classification of solutions for coupled systems but also opens new avenues for analyzing the behavior of solutions in high-dimensional spaces, which has not been fully explored in the literature. Furthermore, our findings offer a solid theoretical foundation for future studies of multi-component systems with nonlocal effects or interactions, particularly in the context of mathematical physics and the theory of phase transitions.

\section{The linearized equation and the Liouville theorem}
From now on, we aim at proving Theorem~\ref{thm. main theorem}. In this section, 
we consider the linearized equation of \eqref{eq. main} (or equivalently \eqref{eq. simplified B-E}), which takes the form of a Schr\"odinger-type equation.

\subsection{The Schr\"odinger equation}
Let $\vec{e}\in\partial B_{1}$ be a unit vector, and  differentiate \eqref{eq. simplified B-E} in the $\vec{e}$ direction. The following lemma gives the linearized equation of \eqref{eq. simplified B-E}.
\begin{lemma}\label{lem. derivative satisfies the Schrodinger equation}
    Let $(u,v)$ be a solution to \eqref{eq. simplified B-E} and let $\vec{e}\in\partial B_{1}$ be a unit vector, then $(\partial_{\vec{e}}u,\partial_{\vec{e}}v)$ satisfies the following Schr\"odinger-type  equation:
    \begin{equation}\label{eq. Schrodinger of derivative}
        \left\{\begin{aligned}
            -\Delta\partial_{\vec{e}}u+W_{uu}(u,v)\partial_{\vec{e}}u+W_{uv}(u,v)\partial_{\vec{e}}v=0,\\
            -\Delta\partial_{\vec{e}}v+W_{vu}(u,v)\partial_{\vec{e}}u+W_{vv}(u,v)\partial_{\vec{e}}v=0.
        \end{aligned}\right.
    \end{equation}
\end{lemma}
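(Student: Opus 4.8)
The plan is to differentiate the system \eqref{eq. simplified B-E} componentwise in the direction $\vec e$ and recognize the result as the linearization of the map $(u,v)\mapsto (W_u(u,v),W_v(u,v))$. Concretely, I would fix $\vec e\in\partial B_1$ and apply the directional derivative operator $\partial_{\vec e}=\vec e\cdot\nabla$ to each scalar equation in \eqref{eq. simplified B-E}. Since $(u,v)$ is a classical solution and $W$ is a polynomial (hence $C^\infty$), both sides are differentiable, and $\partial_{\vec e}$ commutes with $\Delta$ on $C^3$ functions, so $\partial_{\vec e}(\Delta u)=\Delta(\partial_{\vec e}u)$ and likewise for $v$.

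The main computational step is the chain rule on the right-hand sides. Writing $W_u=W_u(u(x),v(x))$ as a composition, we get
\begin{equation*}
    \partial_{\vec e}\bigl(W_u(u,v)\bigr)=W_{uu}(u,v)\,\partial_{\vec e}u+W_{uv}(u,v)\,\partial_{\vec e}v,
\end{equation*}
and symmetrically $\partial_{\vec e}\bigl(W_v(u,v)\bigr)=W_{vu}(u,v)\,\partial_{\vec e}u+W_{vv}(u,v)\,\partial_{\vec e}v$. Combining with $\Delta(\partial_{\vec e}u)=\partial_{\vec e}(W_u)$ and $\Delta(\partial_{\vec e}v)=\partial_{\vec e}(W_v)$ and moving everything to one side yields exactly the system \eqref{eq. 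Schrodinger of derivative}. One can optionally double-check the Hessian entries against the explicit formula \eqref{eq. double well infinitesimal potential energy}, e.g. $W_{uu}=3u^2+v^2-1+\alpha\cdot(\text{terms in }v)$ and $W_{uv}=W_{vu}=2uv+(\alpha uv-\omega)+\alpha uv$, but the statement as written only needs the abstract Hessian, so I would keep the proof at the level of the chain rule and note that $W_{uv}=W_{vu}$ by symmetry of second partials (Schwarz's theorem), which is why the two cross terms match.

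There is essentially no obstacle here: the only thing to be slightly careful about is the regularity needed to justify differentiating the equation, i.e. that $(u,v)\in C^3_{loc}$. This follows from standard elliptic bootstrapping — a bounded distributional solution of \eqref{eq. main} with polynomial (hence locally Lipschitz) nonlinearity is in $C^{2,\gamma}_{loc}$ by Schauder theory, and then smooth by iteration since $W\in C^\infty$ — so $\partial_{\vec e}u$ and $\partial_{\vec e}v$ are genuine $C^2$ functions and \eqref{eq. Schrodinger of derivative} holds classically. I would state this regularity remark in one sentence and then present the two-line chain-rule computation; the lemma is really a bookkeeping step setting up the slope-function analysis that follows.
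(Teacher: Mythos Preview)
Your proposal is correct and follows exactly the same approach as the paper: differentiate \eqref{eq. simplified B-E} in the $\vec e$ direction, commute $\partial_{\vec e}$ with $\Delta$, and apply the chain rule to $W_u(u,v)$ and $W_v(u,v)$. The paper's proof is even terser than yours, omitting the regularity remark and the explicit Hessian check, but the argument is identical.
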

\begin{proof}
    Differentiating \eqref{eq. simplified B-E} in the $\vec{e}$ direction, we obtain
    \begin{equation*}
        \Delta\partial_{\vec{e}}u=\partial_{\vec{e}}\Delta u=\partial_{\vec{e}}W_{u}(u,v).
    \end{equation*}
    Applying the chain rule then yields the first equation in \eqref{eq. Schrodinger of derivative}. The second equation can be derived in a similar manner.
\end{proof}

Motivated by Lemma~\ref{lem. derivative satisfies the Schrodinger equation}, it is natural to define the following Schr\"odinger operator:
\begin{definition}
Let $(u,v)$ be a solution of \eqref{eq. simplified B-E}. We define the Schr\"odinger operator associated with $(u,v)$ as 
\[
L : C^{\infty}(\mathbb{R}^n) \times C^{\infty}(\mathbb{R}^n) \to C^{\infty}(\mathbb{R}^n) \times C^{\infty}(\mathbb{R}^n)
\]
by
   \begin{equation}\label{eq. Schrodinger}
        L\begin{bmatrix}\xi\\\eta\end{bmatrix}:=\Big\{-\Delta+D^{2}W\Big\}\begin{bmatrix}\xi\\\eta\end{bmatrix}=\begin{bmatrix}-\Delta\xi+W_{uu}(u,v)\xi+W_{uv}(u,v)\eta\\-\Delta\eta+W_{vu}(u,v)\xi+W_{vv}(u,v)\eta\end{bmatrix}.
    \end{equation}
\end{definition}

Next, we define a quadratic operator associated with the ratio.

\begin{definition}
Let $(\varphi, \psi) \in C^{\infty}(\mathbb{R}^n) \times C^{\infty}(\mathbb{R}^n)$ satisfy $\varphi > 0 > \psi$ everywhere in $\mathbb{R}^n$. Let $(\sigma, \tau)$ be a pair of smooth functions such that $(\xi, \eta) := (\varphi \sigma, \psi \tau)$ belongs to $C^{\infty}(\mathbb{R}^n) \times C^{\infty}(\mathbb{R}^n)$ with all derivatives bounded. Then we define
\begin{align}\label{eq. ratio quadratic operator}
Q_{(\varphi, \psi)} \begin{bmatrix} \sigma \\ \tau \end{bmatrix} 
:=& -\xi \Delta \xi + \sigma^2 \varphi \Delta \varphi - \eta \Delta \eta + \tau^2 \psi \Delta \psi \\
=& -\sigma \, \mathrm{div}(\varphi^2 \nabla \sigma) - \tau \, \mathrm{div}(\psi^2 \nabla \tau).
\end{align}
\end{definition}

In fact, the quadratic form \eqref{eq. ratio quadratic operator} always acts as a negative semi-definite form in the context of this paper.

\begin{lemma}\label{lem. negative definite form}
Assume that $\varphi > 0 > \psi$ everywhere, and that 
\[
L \begin{bmatrix} \varphi \\ \psi \end{bmatrix} \in \overline{\mathbb{R}_+} \times \overline{\mathbb{R}_-} \quad \text{(the fourth quadrant).}
\]
Let $(\xi, \eta)$ belong to the kernel of \eqref{eq. Schrodinger}, and define 
\[
(\sigma, \tau) := \Big(\frac{\xi}{\varphi}, \frac{\eta}{\psi}\Big).
\]
Then there exists a strictly positive function $\mathcal{P}(x)$ such that
\begin{equation}\label{eq. Q is negative semi-definite}
Q_{(\varphi, \psi)} \begin{bmatrix} \sigma \\ \tau \end{bmatrix} \leq -(\sigma - \tau)^2 \mathcal{P}(x) \quad \text{everywhere in } \mathbb{R}^n.
\end{equation}
\end{lemma}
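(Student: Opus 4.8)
The plan is to prove \eqref{eq. Q is negative semi-definite} by a direct pointwise computation, feeding the differential equations we are handed into the purely algebraic form of $Q_{(\varphi,\psi)}$ in \eqref{eq. ratio quadratic operator}. We have two sets of relations. Since $(\xi,\eta)$ lies in the kernel of \eqref{eq. Schrodinger}, $\Delta\xi = W_{uu}\xi + W_{uv}\eta$ and $\Delta\eta = W_{uv}\xi + W_{vv}\eta$ (using $W_{uv}=W_{vu}$). Since $L[\varphi,\psi] = (f,g)$ with $f\ge 0\ge g$, we likewise get $\Delta\varphi = W_{uu}\varphi + W_{uv}\psi - f$ and $\Delta\psi = W_{uv}\varphi + W_{vv}\psi - g$. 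Substituting these four identities into $Q_{(\varphi,\psi)}[\sigma,\tau] = -\xi\Delta\xi + \sigma^2\varphi\Delta\varphi - \eta\Delta\eta + \tau^2\psi\Delta\psi$ is the whole mechanism.

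The key algebraic point is that the diagonal Hessian entries cancel: because $\xi = \varphi\sigma$ one has $\xi^2 = \varphi^2\sigma^2$, so $-W_{uu}\xi^2 + W_{uu}\sigma^2\varphi^2 = 0$, and similarly $-W_{vv}\eta^2 + W_{vv}\tau^2\psi^2 = 0$. What survives is built entirely from the off-diagonal entry $W_{uv}$ and the defect terms $f,g$; using $\xi\eta = \varphi\psi\,\sigma\tau$ and collecting, one finds
\[
Q_{(\varphi,\psi)}\begin{bmatrix}\sigma\\\tau\end{bmatrix} = W_{uv}(u,v)\,\varphi\psi\,\big[\sigma(\sigma-\tau) + \tau(\tau-\sigma)\big] - \sigma^2\varphi f - \tau^2\psi g = W_{uv}(u,v)\,\varphi\psi\,(\sigma-\tau)^2 - \sigma^2\varphi f - \tau^2\psi g,
\]
where the identity $\sigma(\sigma-\tau)+\tau(\tau-\sigma) = (\sigma-\tau)^2$ is exactly where the perfect square is produced. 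Now $\varphi>0>\psi$ forces $-\sigma^2\varphi f\le 0$ (since $f\ge 0$) and $-\tau^2\psi g\le 0$ (since $g\le 0$), so the two defect terms only help, and it remains to control the leading term.

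The one substantive input — and the point where the structure of the Bose--Einstein nonlinearity matters — is the sign of $W_{uv}$. Differentiating \eqref{eq. double well infinitesimal potential energy} gives $W_{uv}(u,v) = (2+2\alpha)uv - \omega$, and the range estimate \cite[Proposition 1.2]{AFN21} yields $uv\ge \frac{\omega}{\alpha}$ on the range of any entire positive solution, hence $W_{uv}(u,v) \ge (2+2\alpha)\frac{\omega}{\alpha} - \omega = \omega\big(1+\frac{2}{\alpha}\big) > 0$. Therefore, setting $\mathcal{P}(x) := -W_{uv}(u(x),v(x))\,\varphi(x)\psi(x) = \big((2+2\alpha)uv - \omega\big)\big(-\varphi\psi\big)$, both factors are strictly positive (recall $\varphi>0>\psi$), so $\mathcal{P}>0$ everywhere, and the displayed identity becomes $Q_{(\varphi,\psi)}[\sigma,\tau]\le -(\sigma-\tau)^2\mathcal{P}(x)$, as claimed. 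The computation itself is routine; the only obstacle worth naming is the positivity of $W_{uv}$ on the range, which is precisely the coupling difficulty flagged in the introduction and is what upgrades a merely negative semi-definite form (as for a single equation) to one that, through the factor $(\sigma-\tau)^2$, will also force $\sigma=\tau$ later.
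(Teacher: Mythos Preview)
Your proof is correct and follows essentially the same approach as the paper: substitute the Schr\"odinger relations for $(\xi,\eta)$ and $(\varphi,\psi)$ into the algebraic form of $Q_{(\varphi,\psi)}$, observe the cancellation of the diagonal $W_{uu}$ and $W_{vv}$ terms, collect the off-diagonal $W_{uv}$ terms into $W_{uv}\varphi\psi(\sigma-\tau)^2$, and invoke the range estimate $uv\ge\omega/\alpha$ to get $W_{uv}>0$. The only cosmetic difference is that you keep the defect terms $-\sigma^2\varphi f-\tau^2\psi g$ explicit and discard them by sign at the end, whereas the paper absorbs them into the inequalities $\varphi\Delta\varphi\le W_{uu}\varphi^2+W_{uv}\varphi\psi$ and $\psi\Delta\psi\le W_{vu}\varphi\psi+W_{vv}\psi^2$ at the outset.
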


\begin{proof}
By the positivity assumptions on $(\varphi, \psi)$, we have
\begin{equation*}
\varphi \Delta \varphi \leq W_{uu} \varphi^2 + W_{uv} \varphi \psi, \quad 
\psi \Delta \psi \leq W_{vu} \varphi \psi + W_{vv} \psi^2.
\end{equation*}
It follows that
\begin{align*}
Q_{(\varphi, \psi)} \begin{bmatrix} \sigma \\ \tau \end{bmatrix} 
&\leq -\big( W_{uu} \xi^2 + W_{uv} \xi \eta \big) + \sigma^2 (W_{uu} \varphi^2 + W_{uv} \varphi \psi) \\
&\quad - \big( W_{vu} \xi \eta + W_{vv} \eta^2 \big) + \tau^2 (W_{vu} \varphi \psi + W_{vv} \psi^2) \\
&= W_{uu} (\sigma^2 \varphi^2 - \xi^2) + W_{vv} (\tau^2 \psi^2 - \eta^2) \\
&\quad + W_{uv} \big( -\xi \eta + \sigma^2 \varphi \psi - \xi \eta + \tau^2 \varphi \psi \big) \\
&= W_{uv} \varphi \psi (\sigma - \tau)^2 = - (\sigma - \tau)^2 \mathcal{P}(x), 
\end{align*}
with $\mathcal{P}(x) := - W_{uv} \varphi \psi.$

To show that $\mathcal{P}(x)$ is strictly positive, it suffices to verify that $W_{uv} > 0$. Direct computation yields
\begin{equation}\label{eq. W_uv>0}
W_{uv}(u,v) = (2 + 2\alpha) uv - \omega \geq \Big( \frac{2}{\alpha} + 1 \Big) \omega > 0,
\end{equation}
where we have used the range estimate from \cite[Proposition 1.2]{AFN21}: $\displaystyle uv \geq \frac{\omega}{\alpha}$.
\end{proof}

\subsection{The Caccioppoli inequality and the Liouville theorem}

In this subsection, we prove the following Liouville-type theorem using the Caccioppoli inequality. A logarithmic cut-off trick will be employed when selecting the cut-off function.

\begin{lemma}[Liouville-type result]\label{lem. Liouville-type result with cut-off function}
Let $\varphi, \psi, \xi, \eta, \sigma, \tau \in C^\infty(\mathbb{R}^n)$ satisfy
\begin{align}\label{eq1}
Q_{(\varphi, \psi)} \begin{bmatrix} \sigma \\ \tau \end{bmatrix} 
&= -\xi \Delta \xi + \sigma^2 \varphi \Delta \varphi - \eta \Delta \eta + \tau^2 \psi \Delta \psi \\
&= -\sigma \cdot \mathrm{div}(\varphi^2 \nabla \sigma) - \tau \cdot \mathrm{div}(\psi^2 \nabla \tau) \\
&\leq - (\sigma - \tau)^2 \mathcal{P}(x),
\end{align}
where $\varphi > 0 > \psi$ and $(\xi, \eta) = (\varphi \sigma, \psi \tau)$. 

If, for all $R > 2$, the following growth conditions hold:
\begin{equation}\label{eq. quadratic energy growth condition}
\int_{B_R} \xi^2 \, dx \le C_0 R^2, 
\quad 
\int_{B_R} \eta^2 \, dx \le C_0 R^2,
\end{equation}
then $\sigma(x)$ and $\tau(x)$ are constant in $\mathbb{R}^n$, with $\sigma = \tau$.
\end{lemma}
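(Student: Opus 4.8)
The plan is to test the differential inequality \eqref{eq1} against $\eta_R^2$, where $\eta_R$ is a logarithmic cut-off function, and to exploit the negative-definite term $-(\sigma-\tau)^2\mathcal{P}(x)$ on the right-hand side. First I would multiply the equation $-\sigma\,\mathrm{div}(\varphi^2\nabla\sigma)-\tau\,\mathrm{div}(\psi^2\nabla\tau)\le -(\sigma-\tau)^2\mathcal{P}(x)$ by a nonnegative test function $\zeta^2$ (to be chosen as a squared cut-off) and integrate by parts. This yields
\begin{equation*}
\int \zeta^2\big(\varphi^2|\nabla\sigma|^2+\psi^2|\nabla\tau|^2\big)\,dx
+\int (\sigma-\tau)^2\mathcal{P}(x)\,\zeta^2\,dx
\le -2\int \zeta\big(\varphi^2\sigma\,\nabla\sigma+\psi^2\tau\,\nabla\tau\big)\cdot\nabla\zeta\,dx.
\end{equation*}
The key observation is that $\varphi\sigma=\xi$ and $\psi\tau=\eta$, so the right-hand side is bounded by $2\int|\zeta||\nabla\zeta|\big(|\xi||\varphi||\nabla\sigma|+|\eta||\psi||\nabla\tau|\big)\,dx$; by Cauchy–Schwarz with a small parameter $\epsilon$, this absorbs $\epsilon\int\zeta^2(\varphi^2|\nabla\sigma|^2+\psi^2|\nabla\tau|^2)$ into the left-hand side at the cost of $C_\epsilon\int|\nabla\zeta|^2(\xi^2+\eta^2)\,dx$.

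\textbf{Choice of cut-off and conclusion.} Next I would insert the standard logarithmic cut-off: fix $R$ large, set $\zeta=1$ on $B_{\sqrt R}$, $\zeta=0$ outside $B_R$, and $\zeta(x)=\dfrac{\log(R/|x|)}{\log\sqrt R}$ in the annulus $B_R\setminus B_{\sqrt R}$, so that $|\nabla\zeta|^2=\dfrac{1}{(\log\sqrt R)^2|x|^2}$ on the annulus. Splitting the annulus into dyadic shells $B_{2^{k+1}}\setminus B_{2^k}$ and using the growth hypothesis \eqref{eq. quadratic energy growth condition}, namely $\int_{B_\rho}(\xi^2+\eta^2)\le 2C_0\rho^2$, one estimates
\begin{equation*}
\int_{B_R}|\nabla\zeta|^2(\xi^2+\eta^2)\,dx
\le \frac{C}{(\log\sqrt R)^2}\sum_{k}\frac{1}{4^k}\int_{B_{2^{k+1}}}(\xi^2+\eta^2)\,dx
\le \frac{C\,C_0\,\log R}{(\log R)^2}=\frac{C\,C_0}{\log R}\xrightarrow[R\to\infty]{}0.
\end{equation*}
Therefore, letting $R\to\infty$ forces both $\int_{\mathbb{R}^n}\big(\varphi^2|\nabla\sigma|^2+\psi^2|\nabla\tau|^2\big)\,dx=0$ and $\int_{\mathbb{R}^n}(\sigma-\tau)^2\mathcal{P}(x)\,dx=0$. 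Since $\varphi>0>\psi$ everywhere, the first identity gives $\nabla\sigma\equiv 0$ and $\nabla\tau\equiv 0$, so $\sigma$ and $\tau$ are constants; since $\mathcal{P}(x)>0$ everywhere, the second identity gives $\sigma\equiv\tau$.

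\textbf{Main obstacle.} The one delicate point is justifying the integration by parts and the absorption argument rigorously: the weights $\varphi^2,\psi^2$ and the functions $\sigma,\tau$ are only known to be smooth, not bounded, so one must check that all boundary terms vanish (guaranteed by the compact support of $\zeta$) and that the products appearing are locally integrable — which is exactly why the hypothesis is phrased in terms of $\xi=\varphi\sigma$ and $\eta=\psi\tau$ rather than $\sigma,\tau$ directly, since it is $\xi,\eta$ (the kernel elements of the Schrödinger operator, hence solutions of a nice elliptic system with bounded coefficients) that enjoy the controlled $L^2$-growth. Care is also needed in the Cauchy–Schwarz step to make sure the cross terms genuinely recombine into $\varphi|\nabla\sigma|$ and $\psi|\nabla\tau|$ paired against $|\xi|$ and $|\eta|$ respectively, rather than producing uncontrolled factors of $\varphi$ or $\psi$; a short computation confirms that the pairing works out precisely because of the structure $\xi=\varphi\sigma$, $\eta=\psi\tau$.
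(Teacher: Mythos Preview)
Your proposal is correct and follows essentially the same approach as the paper: multiply the divergence-form inequality by the square of a logarithmic cut-off, integrate by parts, and use a dyadic decomposition together with the quadratic growth bound \eqref{eq. quadratic energy growth condition} to show that the error term vanishes as $R\to\infty$. The only cosmetic differences are that the paper scales the cut-off between $B_R$ and $B_{R^2}$ (rather than $B_{\sqrt R}$ and $B_R$) and uses the exact identity $\varphi^{2}\nabla\sigma\cdot\nabla(\sigma\zeta^{2})=\varphi^{2}|\nabla(\zeta\sigma)|^{2}-\varphi^{2}\sigma^{2}|\nabla\zeta|^{2}$ in place of your Cauchy--Schwarz absorption; both variants lead to the same estimate $\int_{B_R}(\varphi^{2}|\nabla\sigma|^{2}+\psi^{2}|\nabla\tau|^{2})\le C/\log R$ and hence to the same conclusion.
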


\begin{proof}
For $R>2$, define a Lipschitz cut-off function 
\[
\zeta_R(x) =
\begin{cases}
1,& |x|\le R,\\[1mm]
\dfrac{\ln(R^2/|x|)}{\ln R}, & R<|x|<R^2,\\[1mm]
0,& |x|\ge R^2.
\end{cases}
\]
Then $\zeta_R$  satisfies
\begin{equation}\label{eq:grad-bound}
|\nabla \zeta_R(x)| = \frac{1}{|x|\ln R}, \quad \mbox{for}\,\,R<|x|<R^2.
\end{equation}

Denote the annular region $A_R=\{x\in \mathbb{R}^3 \mid R<|x|<R^2\}$. 
We further decompose $A_R$ dyadically:
\[
A_R \subset \bigcup_{k=0}^{K+1} A_R^k,\qquad A_R^k:=\{x\in \mathbb{R}^3 \mid 2^k R<|x|\le 2^{k+1}R\},\quad K=\lfloor\log_2 R\rfloor,
\]
where $\lfloor x \rfloor$ denotes the floor function.

Using the energy growth condition \eqref{eq. quadratic energy growth condition} and  \eqref{eq:grad-bound}, we obtain 
\begin{equation}\label{eq:g1}
\begin{aligned}
    \int_{A_R} \xi^2 |\nabla \zeta_R|^2 dx\leq &  \sum_{k=0}^{K+1} \int_{A_R^k} \xi^2 |\nabla \zeta_R|^2 dx\leq C_0 \sum_{k=0}^{K+1}\Big( \frac{1}{(2^kR\ln R)^2}  (2^{k+1}R)^2  \Big)\\
    =&\frac{4C_0}{(\ln R)^2}(K+2)\leq\frac{C}{\ln R}.
\end{aligned}
\end{equation}
Similarly, we have
\begin{equation}\label{eq:g2}
    \int_{A_R} \eta^2 |\nabla \zeta_R|^2 dx  \leq  \frac{C}{\ln R}.
\end{equation}

Multiplying \eqref{eq1} by $\zeta_R^2(x)$ and integrating over $\mathbb R^3$ yields
\begin{equation*}
\begin{aligned}
0 \ge & \int_{\mathbb R^3}\big(-\sigma\cdot\mathrm{div}(\varphi^{2}\nabla\sigma)-\tau\cdot\mathrm{div}(\psi^{2}\nabla\tau)\big)\zeta_R^2(x)\,dx\\
=&\int_{{\mathbb R}^3}\big(\varphi^{2} \nabla\sigma \cdot\nabla (\sigma \zeta_R^2)+\psi^{2}\nabla\tau \cdot\nabla (\tau \zeta_R^2)\big)\,dx\\
= & \int_{B_{R^2}}\big(\varphi^{2}|\nabla(\zeta_{R}\sigma)|^2+\psi^{2}|\nabla(\zeta_{R}\tau)|^2\big)\,dx
-\int_{B_{R^2}}\big(\varphi^2\sigma^{2}|\nabla\zeta_R|^{2}+\psi^{2}\tau^{2}|\nabla \zeta_R|^{2}\big)\,dx\\
= & \int_{B_{R^2}}\big(\varphi^{2}|\nabla(\zeta_{R}\sigma)|^2+\psi^{2}|\nabla(\zeta_{R}\tau)|^2\big)\,dx
-\int_{B_{R^2}}\big(\xi^{2}|\nabla\zeta_R|^{2}+\eta^{2}|\nabla \zeta_R|^{2}\big)\,dx.
\end{aligned}
\end{equation*}
Since  $\zeta_{R}\equiv1$ in $B_{R}$, the inequality above implies that
\begin{align*}  \int_{B_{R}}\big(\varphi^{2}|\nabla\sigma|^2+\psi^{2}|\nabla\tau|^2\big)\,dx\leq&\int_{B_{R^2}}\big(\varphi^{2}|\nabla(\zeta_{R}\sigma)|^2+\psi^{2}|\nabla(\zeta_{R}\tau)|^2\big)\,dx\\
    \leq&\int_{B_{R^2}}\big(\xi^{2}|\nabla\zeta_R|^{2}+\eta^{2}|\nabla \zeta_R|^{2}\big)\,dx.
\end{align*}
Employing \eqref{eq:g1} and \eqref{eq:g2}, we get
\begin{equation*}
    \int_{B_{R}}\big(\varphi^{2}|\nabla\sigma|^2+\psi^{2}|\nabla\tau|^2\big)\,dx\leq\frac{C}{\ln{R}},\quad\mbox{for }R>2.
\end{equation*}
It follows that the right-hand side tends to zero as $R\to \infty$, and hence 
$$
\int_{\mathbb{R}^3}\big(\varphi^{2} |\nabla\sigma|^2+\psi^{2} |\nabla\tau|^2\big)\,dx=0,
$$
which implies that $\sigma$ and $\tau$ are constant. Moreover, from \eqref{eq1} we conclude that
$$
\sigma=\tau=\mbox{constant}
$$
since  $\mathcal{P}(x)$  is strictly positive. 
This completes the proof of Lemma \ref{lem. Liouville-type result with cut-off function}.
\end{proof}

As a consequence, Theorem~\ref{thm. main theorem} becomes immediate in dimension $2$ instead of $3$. More precisely, we have the following corollary.
\begin{corollary}\label{cor. n=2 is more obvious}
    Assume that $(u,v)$ is a solution to \eqref{eq. simplified B-E} in $\mathbb{R}^{2}$, and that there exists a pair $(\varphi,\psi)$, such that $\varphi>0>\psi$ everywhere and $L\begin{bmatrix}\varphi\\\psi\end{bmatrix}$ always belongs to $\overline{\mathbb{R}_{+}}\times\overline{\mathbb{R}_{-}}$. 
Then one of the following holds:
\begin{itemize}
    \item $(u,v)$ is constant.
    \item Or, there exists a unit vector $\vec{\nu}\in\mathbb{R}^{2}$ such that
    \[
    (u(x),v(x)) = (u(x\cdot\vec{\nu}),v(x\cdot\vec{\nu})) \quad \text{with} \quad \partial_{\vec{\nu}} u > 0 > \partial_{\vec{\nu}} v.
    \]
 \end{itemize}   
\end{corollary}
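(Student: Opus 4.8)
The plan is to reduce the statement to a direct application of Lemma~\ref{lem. negative definite form} and Lemma~\ref{lem. Liouville-type result with cut-off function}, exploiting the low dimension $n=2$ so that the energy-growth hypothesis \eqref{eq. quadratic energy growth condition} is automatic. First I would observe that $(u,v)$ is bounded: either this is assumed, or — following \cite{AFN21} — the range estimate already confines $(u,v)$ to the bounded region in Figure~\ref{fig. range}, so by elliptic regularity $u,v$ and all their first derivatives are bounded on $\mathbb{R}^2$. Fix $i \in \{1,2\}$ and set $(\xi_i,\eta_i) := (\partial_{x_i} u, \partial_{x_i} v)$. By Lemma~\ref{lem. derivative satisfies the Schrodinger equation} this pair lies in the kernel of the Schrödinger operator $L$ in \eqref{eq. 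Schrodinger}. With the given $(\varphi,\psi)$ satisfying $\varphi>0>\psi$ and $L\begin{bmatrix}\varphi\\\psi\end{bmatrix}\in\overline{\mathbb{R}_+}\times\overline{\mathbb{R}_-}$, define $(\sigma_i,\tau_i):=(\xi_i/\varphi,\eta_i/\psi)$; these are smooth since $\varphi,\psi$ are nowhere zero, and Lemma~\ref{lem. negative definite form} gives
\[
Q_{(\varphi,\psi)}\begin{bmatrix}\sigma_i\\\tau_i\end{bmatrix}\le -(\sigma_i-\tau_i)^2\,\mathcal{P}(x),\qquad \mathcal{P}(x)=-W_{uv}\varphi\psi>0.
\]

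Next I would verify the growth condition. Since $\xi_i=\partial_{x_i}u$ and $\eta_i=\partial_{x_i}v$ are bounded, $\int_{B_R}\xi_i^2\,dx + \int_{B_R}\eta_i^2\,dx \le C|B_R| = C'R^2$ in $\mathbb{R}^2$, which is exactly \eqref{eq. quadratic energy growth condition} (here the power $R^2=R^n$ matches because $n=2$; this is precisely why the $n=2$ case is immediate while $n=3$ needs the extra energy-estimate machinery advertised in Section~2). Applying Lemma~\ref{lem. Liouville-type result with cut-off function} — whose proof is written for $\mathbb{R}^3$ but goes through verbatim in $\mathbb{R}^2$, the dyadic annular estimates \eqref{eq:g1}–\eqref{eq:g2} only using the matching growth exponent — we conclude that $\sigma_i$ and $\tau_i$ are constant and $\sigma_i=\tau_i=:c_i$. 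Hence $\xi_i = c_i\varphi$ and $\eta_i = c_i\psi$ for $i=1,2$.

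Finally I would extract the one-dimensional structure. From $\partial_{x_i}u = c_i\varphi$ we get $\nabla u = (c_1,c_2)\varphi$ and similarly $\nabla v = (c_1,c_2)\psi$; in particular $\nabla u$ and $\nabla v$ are everywhere parallel to the fixed vector $\vec{c}=(c_1,c_2)$. If $\vec{c}=0$ then $u,v$ are constant, the first alternative. If $\vec{c}\ne0$, set $\vec{\nu}:=\vec{c}/|\vec{c}|$; then $u$ and $v$ are constant along every line orthogonal to $\vec{\nu}$, so $u(x)=U(x\cdot\vec{\nu})$ and $v(x)=V(x\cdot\vec{\nu})$ for some one-variable functions, with $U'=|\vec{c}|\,\varphi$ and $V'=|\vec{c}|\,\psi$. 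Since $\varphi>0>\psi$ and $|\vec{c}|>0$ we obtain $\partial_{\vec{\nu}}u = U'>0>V' = \partial_{\vec{\nu}}v$, the second alternative. The only genuinely delicate point is making sure the logarithmic-cut-off Liouville argument of Lemma~\ref{lem. Liouville-type result with cut-off function} is valid with $n=2$ in place of $n=3$; but since that proof uses the dimension only through the volume growth $|B_R|\sim R^n$ and the condition \eqref{eq. quadratic energy growth condition} is stated with the matching exponent, no change is needed — one simply replaces $\mathbb{R}^3$ by $\mathbb{R}^2$ throughout.
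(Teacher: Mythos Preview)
Your proposal is correct and follows essentially the same route as the paper: you set $(\xi_i,\eta_i)=(\partial_{x_i}u,\partial_{x_i}v)$, invoke Lemma~\ref{lem. derivative satisfies the Schrodinger equation} and Lemma~\ref{lem. negative definite form}, verify \eqref{eq. quadratic energy growth condition} via the Schauder gradient bound and $|B_R|\sim R^2$, apply Lemma~\ref{lem. Liouville-type result with cut-off function} to get $\sigma_i\equiv\tau_i\equiv c_i$, and then read off the dichotomy from $\nabla u=(c_1,c_2)\varphi$, $\nabla v=(c_1,c_2)\psi$. Your explicit remark that the logarithmic cut-off argument of Lemma~\ref{lem. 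Liouville-type result with cut-off function} is dimension-insensitive (its proof is written over $\mathbb{R}^3$ but uses only the matching exponent in \eqref{eq. quadratic energy growth condition}) is a useful clarification that the paper leaves implicit.
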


\begin{proof}
    For $i=1,2$, denote
    \begin{equation*}
        (\xi_{i},\eta_{i}) := \Big(\frac{\partial u}{\partial x_{i}}, \frac{\partial v}{\partial x_{i}}\Big), \quad 
        (\sigma_{i},\tau_{i}) := \Big(\frac{\xi_{i}}{\varphi}, \frac{\eta_{i}}{\psi}\Big).
    \end{equation*}
    Let $Q_{(\varphi,\psi)}\begin{bmatrix}\sigma_{i}\\\tau_{i}\end{bmatrix}$ be defined as in \eqref{eq. ratio quadratic operator} for $i=1,2$. We first verify that the quadratic energy growth condition \eqref{eq. quadratic energy growth condition} holds. Indeed, it suffices to show
    \begin{equation*}
        \int_{B_{R}} \frac{|\nabla u|^{2} + |\nabla v|^{2}}{2} \, dx \leq C R^2.
    \end{equation*}
    This follows from the Schauder estimate
    \[
        \|\nabla u\|_{L^{\infty}(\mathbb{R}^{2})} + \|\nabla v\|_{L^{\infty}(\mathbb{R}^{2})} \leq C,
    \]
    together with the fact that the area of a ball in $\mathbb{R}^2$ grows quadratically with its radius.

    By Lemma~\ref{lem. energy growth}, there exist constants $c_1, c_2$ such that
    \begin{equation*}
        \sigma_{i} \equiv \tau_{i} \equiv c_{i}, \quad i=1,2.
    \end{equation*}
    By the definition of $(\sigma_{i},\tau_{i})$, we have
    \begin{equation}\label{eq. gradient of u and v in 2 dim}
        \nabla u(x) = \varphi(x) (c_1 \vec{e}_1 + c_2 \vec{e}_2), \quad 
        \nabla v(x) = \psi(x) (c_1 \vec{e}_1 + c_2 \vec{e}_2).
    \end{equation}

    We now distinguish two cases:
    \begin{itemize}
        \item \textbf{Case 1:} If $c_1 = c_2 = 0$, then $u$ and $v$ are both constant.
        \item \textbf{Case 2:} Otherwise, set
        \begin{equation*}
            \vec{\nu} = \frac{c_1 \vec{e}_1 + c_2 \vec{e}_2}{\sqrt{c_1^2 + c_2^2}}, \quad
            \vec{\nu}^{\perp} = \frac{-c_2 \vec{e}_1 + c_1 \vec{e}_2}{\sqrt{c_1^2 + c_2^2}}.
        \end{equation*}
        Then, from \eqref{eq. gradient of u and v in 2 dim}, we see that
        \[
            \vec{\nu}^{\perp} \cdot \nabla u(x) = \vec{\nu}^{\perp} \cdot \nabla v(x) = 0.
        \]
        Therefore, the functions depend only on $x \cdot \vec{\nu}$:
        \[
            (u(x), v(x)) = (u(x \cdot \vec{\nu}), v(x \cdot \vec{\nu})),
        \]
        with $u$ strictly increasing and $v$ strictly decreasing along the direction $\vec{\nu}$.
    \end{itemize}

    This completes the proof of Corollary~\ref{cor. n=2 is more obvious}.
\end{proof}

\section{Growth rate of the Ginzburg-Landau energy}
In this section, the key result is the following growth estimate for the Ginzburg-Landau energy.

\begin{lemma}\label{lem. energy growth}
    Under the assumptions of Theorem~\ref{thm. main theorem}, there exists a constant $C>0$ such that
    \[
        J(u,v,B_{R}) \leq C R^{n-1}
    \]
    for all sufficiently large $R$.
\end{lemma}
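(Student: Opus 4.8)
The plan is to prove the energy estimate $J(u,v,B_R)\le CR^{n-1}$ (with $n=3$) by the standard ``sliding'' or comparison argument of Ambrosio--Cabr\'e, adapted to the system. The starting observation is that since $\partial_{x_3}u>0>\partial_{x_3}v$ and both components are bounded (the range estimate of \cite[Prop.~2.1]{AFN21} confines $(u,v)$ to the compact shaded region of Figure~\ref{fig. range}), the translates $(u(x',x_3+t),v(x',x_3+t))$ are monotone in $t$, hence converge pointwise as $t\to+\infty$ to a limit $(\overline u(x'),\overline v(x'))$ and as $t\to-\infty$ to $(\underline u(x'),\underline v(x'))$. By elliptic regularity (Schauder estimates, using boundedness of $(u,v)$ and hence of $W_u,W_v$) the convergence is in $C^2_{loc}$, so $(\overline u,\overline v)$ and $(\underline u,\underline v)$ solve \eqref{eq. main} in $\mathbb R^{n-1}=\mathbb R^2$ and are \emph{stable} solutions there (they are limits of monotone translates of a solution whose $x_3$-derivative lies in the kernel of $L$ with a sign, the usual Ambrosio--Cabr\'e argument producing a positive supersolution in the orthogonal variables).

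The second step is the energy comparison. Fix $R$ and work in the cylinder $C_R=B'_R\times(-R,R)$ where $B'_R\subset\mathbb R^2$. For the translated solution $(u_t,v_t):=(u(\cdot,\cdot+t),v(\cdot,\cdot+t))$ one computes, using the equation and integration by parts, that
\begin{equation*}
\frac{d}{dt}J(u_t,v_t,C_R)=\int_{\partial C_R}\Big(\partial_\nu u_t\,\partial_{x_3}u_t+\partial_\nu v_t\,\partial_{x_3}v_t\Big)\,d\mathcal H^{n-1},
\end{equation*}
and on the top/bottom faces $\{x_3=\pm R\}$ this boundary term is controlled by the gradient bound from Schauder, giving $|\frac{d}{dt}J(u_t,v_t,C_R)|\le C R^{n-2}$ on the lateral part and an $O(R^{n-2})$ contribution from the caps after one more integration; more cleanly, one uses that $J(u_t,v_t,C_R)$ is monotone-ish and bounded, so there is a sequence $t_j\to+\infty$ with $J(u_{t_j},v_{t_j},C_R)\to J(\overline u,\overline v,C_R)$. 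The crucial point is that the limiting profile is \emph{independent of $x_3$}, so
\begin{equation*}
J(\overline u,\overline v,C_R)=2R\int_{B'_R}\Big\{\tfrac{|\nabla'\overline u|^2+|\nabla'\overline v|^2}{2}+W(\overline u,\overline v)\Big\}\,dx' = 2R\cdot J'(\overline u,\overline v,B'_R),
\end{equation*}
and by the \emph{minimality} of the limiting profile among its own boundary data — $(\overline u,\overline v)$ is a local minimizer in $\mathbb R^2$ because it is a stable, monotone-obtained limit lying between the two constant steady states — one compares with the constant competitor equal to a steady state to get $J'(\overline u,\overline v,B'_R)\le C R^{n-2}=CR$. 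Hence $J(u,v,C_R)\le J(\overline u,\overline v,C_R)+CR^{n-1}\le CR^{n-1}$, and since $B_R\subset C_R$ this finishes the estimate (the comparison from $u$ to $\overline u$ loses only $\int_{\text{lateral}}\!$ terms of order $R^{n-1}$).

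The main obstacle I expect is making rigorous the ``local minimality'' of the limiting profile $(\overline u,\overline v)$ in $\mathbb R^2$ and the associated energy bound $J'(\overline u,\overline v,B'_R)\le CR$ for a \emph{system}: for a single Allen--Cahn equation one invokes that a monotone limit is a minimizer (Alberti--Ambrosio--Cabr\'e / Savin), but here one must either (i) show directly that $(\overline u,\overline v)$ minimizes the coupled Ginzburg-Landau energy \eqref{eq. Ginzburg-Landau energy, s=1} relative to perturbations supported in $B'_R$ — using that it is trapped between the ordered constant states $(a,b)$ and $(b,a)$, so one can use these as barriers/competitors — or (ii) bypass minimality and bound the energy of $(\overline u,\overline v)$ on $B'_R$ directly by testing the stability inequality with the cut-off of the (sign-definite) translation derivative, exploiting the negative-definite structure already isolated in Lemma~\ref{lem. negative definite form} together with a Caccioppoli-type argument. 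A secondary technical point is handling the lateral boundary of the cylinder when passing from $u$ to $\overline u$, i.e.\ showing the difference $J(u,v,C_R)-J(\overline u,\overline v,C_R)$ is $O(R^{n-1})$ rather than larger; this again uses the uniform gradient bound and the monotonicity in $x_3$ to squeeze $J$ between the energies of the two limiting profiles.
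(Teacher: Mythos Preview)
Your skeleton is the same as the paper's --- translate in $x_3$, bound the variation of the energy along the translation, and then control the energy of the limiting profile --- but the execution diverges at the key step, and the step you yourself flag as ``the main obstacle'' is a genuine gap.

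First a minor point on the evolution part: the bound $|\tfrac{d}{dt}J|\le CR^{n-2}$ on the lateral boundary is wrong (the lateral area is $\sim R^{n-1}$), and ``monotone-ish'' is not an argument. What makes $\int_{-\infty}^{\infty}|\tfrac{dE}{dt}|\,dt\le CR^{n-1}$ work is that the boundary integrand is $\partial_\nu u\,\partial_{x_3}u+\partial_\nu v\,\partial_{x_3}v$, and since $\partial_{x_3}u>0>\partial_{x_3}v$ have \emph{fixed sign}, one can integrate $|\partial_{x_3}u|+|\partial_{x_3}v|$ in $x_3$ over all of $\mathbb R$ and get $\le 2(b-a)$ pointwise in $x'$. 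The paper does exactly this (Lemma~\ref{lem. evolution of E(t)}); you should make this explicit.

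The real gap is the energy bound $J'(\overline u,\overline v,B'_R)\le CR$ for the two-dimensional limiting profile. Neither of your proposed routes is what the paper does, and both are problematic for a system. Route~(i), local minimality plus comparison with a constant competitor, is delicate: for a scalar equation stability and an ordering between constant states do imply minimality (Alberti--Ambrosio--Cabr\'e), but for a system there is no maximum principle ordering the vector $(\overline u,\overline v)$ against a competitor, and ``trapped between $(a,b)$ and $(b,a)$'' does not give you a barrier in any obvious sense. Route~(ii), a direct Caccioppoli bound from stability, would at best yield $\int_{B'_R}|\nabla'\overline u|^2+|\nabla'\overline v|^2\le C\int_{B'_{2R}}W(\overline u,\overline v)+CR$ or similar, which is circular since you have no a~priori control on the potential energy.

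The paper's resolution is different and more structural: it does not try to bound the energy of a general stable two-dimensional solution. Instead it \emph{classifies} $(\overline u,\overline v)$ by applying the two-dimensional Liouville result (Corollary~\ref{cor. n=2 is more obvious}) to the limiting profile itself. This requires producing a pair $\varphi>0>\psi$ on $\mathbb R^2$ with $L\begin{bmatrix}\varphi\\\psi\end{bmatrix}\in\overline{\mathbb R_+}\times\overline{\mathbb R_-}$, which the paper obtains via an eigenvalue problem on balls $B_R$, a sign choice exploiting $\overline W_{uv}>0$, a Harnack inequality for the difference $\varphi_R-\psi_R$, and a limiting argument (Lemma~\ref{lem. classification of one limiting profile}, Step~4). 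The conclusion is that $(\overline u,\overline v)$ is either a constant steady state or one-dimensional with a monotone direction; in either case the energy bound $J'\le CR$ follows by a further (now one-dimensional) application of the same evolution argument. This dimension-reduction-by-classification is the missing idea in your proposal.
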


\subsection{Evolution of the energy}
The strategy to prove Lemma~\ref{lem. energy growth} is to translate the domain $B_R$ along the $\vec{e}_{n}$-direction to infinity. More precisely, we shall establish the following two statements:
\begin{itemize}
    \item[(1)] The energy variation during this translation process is at most $O(R^{n-1})$.
    \item[(2)] The limiting profile has energy bounded by $O(R^{n-1})$.
\end{itemize}

In this subsection, we focus on proving the first statement (see Lemma~\ref{lem. evolution of E(t)} below), while the second statement will be addressed in the next subsection.

\begin{lemma}\label{lem. evolution of E(t)}
    Assume that $(u,v)$ is a global solution to \eqref{eq. simplified B-E} with 
    \[
        \frac{\partial u}{\partial x_{n}} > 0 > \frac{\partial v}{\partial x_{n}},
    \]
    and let $R$ be sufficiently large. For each $t \in \mathbb{R}$, define the cube
    \[
        Q_{t} = [-R,R]^{n-1} \times [t-R, t+R], \qquad E(t) = J(u,v,Q_t).
    \]
    Then there exists a constant $C>0$ such that
    \[
        \int_{-\infty}^{\infty} \Big| \frac{dE}{dt} \Big| \, dt \le C R^{n-1}.
    \]
\end{lemma}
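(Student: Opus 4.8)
The plan is to compute $\frac{dE}{dt}$ explicitly and bound it by the flux of the energy density through the two moving faces $\{x_n = t \pm R\}$ of the cube, and then to integrate this flux bound over $t \in \mathbb{R}$ by interpreting the $t$-integral as a single integration over the infinite slab $[-R,R]^{n-1}\times\mathbb{R}$. First I would differentiate under the integral sign: writing $e(u,v) = \tfrac12(|\nabla u|^2+|\nabla v|^2) + W(u,v)$, we have $E(t) = \int_{[-R,R]^{n-1}} \int_{t-R}^{t+R} e(u,v)\,dx_n\,dx'$, so
\[
    \frac{dE}{dt} = \int_{[-R,R]^{n-1}} \bigl( e(u,v)\big|_{x_n = t+R} - e(u,v)\big|_{x_n = t-R} \bigr)\, dx'.
\]
Hence $\bigl|\frac{dE}{dt}\bigr| \le \int_{[-R,R]^{n-1}} \bigl( e(u,v)\big|_{x_n=t+R} + e(u,v)\big|_{x_n=t-R}\bigr)\,dx'$, and integrating in $t$ over $\mathbb{R}$ and applying Fubini, each of the two terms contributes $\int_{[-R,R]^{n-1}}\int_{\mathbb{R}} e(u,v)\,dx_n\,dx' = J(u,v, [-R,R]^{n-1}\times\mathbb{R})$. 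So the whole statement reduces to the \emph{slab energy bound} $J(u,v,[-R,R]^{n-1}\times\mathbb{R}) \le C R^{n-1}$.

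To prove the slab bound I would exploit the monotonicity $\partial_{x_n} u > 0 > \partial_{x_n} v$ together with the range estimate of \cite{AFN21}, which confines $(u,v)$ to the bounded shaded region of Figure~\ref{fig. range}. Monotonicity gives that for each fixed $x'$, the limits $(u^{\pm}(x'), v^{\pm}(x')) := \lim_{x_n\to\pm\infty}(u(x',x_n),v(x',x_n))$ exist. The potential part is handled first: since $W\ge 0$ and $W$ vanishes only on the curve $\{u^2+v^2=1,\ \alpha uv = \omega\}$ which contains exactly the two stable states $(a,b),(b,a)$ in the admissible region, one shows $\int_{\mathbb{R}} W(u,v)\,dx_n < \infty$ for each $x'$, with a bound uniform in $x'$; concretely, along the monotone trajectory one compares $W$ with the squared distance to the zero set and uses that the trajectory enters and leaves a neighborhood of the zero set in controlled $x_n$-length (here the non-degeneracy $W_{uv}>0$ from \eqref{eq. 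W_uv>0} and the structure of $W$ give the needed coercivity). For the gradient part, the key identity is the \emph{stress-energy / Modica-type} relation: multiplying \eqref{eq. simplified B-E} by $\partial_{x_n}u$ and $\partial_{x_n}v$ and adding yields
\[
    \partial_{x_n}\!\Bigl( \tfrac12|\partial_{x_n}u|^2 + \tfrac12|\partial_{x_n}v|^2 - \tfrac12|\nabla' u|^2 - \tfrac12|\nabla' v|^2 - W(u,v)\Bigr) = \operatorname{div}'(\cdots),
\]
which, integrated over the slab and using that boundary terms on $\partial[-R,R]^{n-1}\times\mathbb{R}$ are $O(R^{n-2})\cdot O(1)$ by the gradient bound $\|\nabla u\|_\infty + \|\nabla v\|_\infty \le C$ (Schauder, since $(u,v)$ is bounded), lets one trade $|\nabla'|^2$-terms for $|\partial_{x_n}|^2$-terms; and $\int_{[-R,R]^{n-1}}\int_{\mathbb{R}}(|\partial_{x_n}u|^2+|\partial_{x_n}v|^2)\,dx_n\,dx'$ is controlled because $\int_{\mathbb{R}}|\partial_{x_n}u|\,dx_n = u^+(x')-u^-(x') \le \mathrm{diam}$ and $|\partial_{x_n}u|\le C$, giving $\int_{\mathbb{R}}|\partial_{x_n}u|^2\,dx_n \le C\cdot\mathrm{diam} = O(1)$ uniformly in $x'$, hence $O(R^{n-1})$ after integrating in $x'$. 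Assembling the potential bound, the $\partial_{x_n}$-gradient bound, and the stress-energy trade gives $J(u,v,[-R,R]^{n-1}\times\mathbb{R}) \le C R^{n-1}$ as desired.

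The main obstacle I anticipate is making the per-fiber estimate $\int_{\mathbb{R}} e(u,v)(x',x_n)\,dx_n \le C$ \emph{uniform in $x'$}: this requires quantitative control of how fast the monotone trajectory $x_n\mapsto (u,v)(x',x_n)$ converges to its endpoints, uniformly over $x'$, which is not immediate from pointwise monotonicity alone. One clean way around this is to avoid per-fiber uniformity entirely and instead run the stress-energy argument globally on the slab, controlling the boundary/tangential terms only through the global gradient bound and the $L^1_{x_n}$-bound on $\partial_{x_n}u$, $\partial_{x_n}v$ coming from monotonicity and the diameter of the range — which is exactly what is sketched above. A secondary technical point is justifying the differentiation under the integral sign and Fubini, which is routine given the uniform bounds on $u,v$ and their first derivatives. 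I would also double-check the sign bookkeeping in the stress-energy identity so that the $|\nabla'|^2$ terms genuinely appear with a favorable sign after integration.
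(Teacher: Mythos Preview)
Your computation of $dE/dt$ as the flux of the energy density $e(u,v)$ through the two moving faces is correct, but it is the wrong expression to use here. The paper instead regards the moving cube as a fixed cube with the \emph{solution} translated, differentiates the integrand, and integrates by parts using the equation $\Delta u=W_u$, $\Delta v=W_v$. The bulk term vanishes and one obtains
\[
    \frac{dE}{dt}=\int_{\partial Q_t}\Bigl(\partial_{x_n}u\,\partial_\nu u+\partial_{x_n}v\,\partial_\nu v\Bigr)\,d\mathcal{H}^{n-1}.
\]
The crucial feature is that this boundary integrand contains only $\partial_{x_n}u$ and $\partial_{x_n}v$, not $W$ and not the tangential gradient. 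Bounding $|\partial_\nu u|,|\partial_\nu v|\le C$ by Schauder and integrating in $t$, each point $x$ with $x'\in[-R,R]^{n-1}$ contributes either twice (top/bottom face) or over a $t$-interval of length $2R$ (lateral face); in either case the total is controlled by $R^{n-1}\cdot\sup_{x'}\int_{\mathbb R}(|\partial_{x_n}u|+|\partial_{x_n}v|)\,dx_n$, and this last integral is at most $2(b-a)$ by monotonicity and the range estimate. That is the entire proof.

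Your route, by contrast, reduces the lemma to the slab bound $J(u,v,[-R,R]^{n-1}\times\mathbb R)\le CR^{n-1}$, which is \emph{stronger} than the energy growth estimate (Lemma~\ref{lem. energy growth}) that the present lemma is meant to help establish; you are running the paper's logic backwards. Your sketch of the slab bound has two concrete gaps. First, $\int_{\mathbb R}W(u,v)\,dx_n<\infty$ would require $W\to0$ as $x_n\to\pm\infty$ on each fiber, but the fiberwise limits $(u^\pm(x'),v^\pm(x'))$ are only known to solve \eqref{eq. simplified B-E} in $\mathbb R^{n-1}$, and the unstable steady state $\bigl(\sqrt{(1+\omega)/(2+\alpha)},\sqrt{(1+\omega)/(2+\alpha)}\bigr)$ --- at which $W>0$ --- cannot be ruled out without the limiting-profile analysis of Lemmas~\ref{lem. classification of one limiting profile}--\ref{lem. classification of two limiting profiles}. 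Second, integrating your stress-energy identity over the slab only relates the values of $\tfrac12|\partial_{x_n}u|^2-\tfrac12|\nabla'u|^2-W$ (and the $v$-analogue) at $x_n=\pm\infty$ to a lateral boundary term; the quantities $\int|\nabla'u|^2\,dx_n$ and $\int W\,dx_n$ never appear integrated in $x_n$ in that identity, so the ``trade'' of $|\nabla'|^2$-terms for $|\partial_{x_n}|^2$-terms does not actually occur.
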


\begin{proof}
    Observe that translating the cube $Q_t$ is equivalent to translating the solution along the $\vec{e}_n$-direction. Define
    \[
        (u^{(t)}(x',x_n), v^{(t)}(x',x_n)) := (u(x', x_n+t), v(x', x_n+t)).
    \]
    Then $J(u,v,Q_t) = J(u^{(t)},v^{(t)},Q_0)$ and
    \[
        \frac{d}{dt}(u^{(t)},v^{(t)}) = \left( \frac{\partial u}{\partial x_n}(x', x_n+t), \frac{\partial v}{\partial x_n}(x', x_n+t) \right).
    \]

    By integration by parts,
    \begin{align*}
        \frac{dE}{dt} 
        &= \frac{d}{dt} \int_{Q_t} \left\{ \frac{|\nabla u|^2 + |\nabla v|^2}{2} + W(u,v) \right\} dx \\
        &= \int_{Q_t} \Big\{ \nabla u \cdot \nabla \frac{\partial u}{\partial x_n} + \nabla v \cdot \nabla \frac{\partial v}{\partial x_n} + W_u \frac{\partial u}{\partial x_n} + W_v \frac{\partial v}{\partial x_n} \Big\} dx \\
        &= \int_{Q_t} \Big\{ (W_u - \Delta u) \frac{\partial u}{\partial x_n} + (W_v - \Delta v) \frac{\partial v}{\partial x_n} \Big\} dx \\
        &\quad + \int_{\partial Q_t} \Big\{ \frac{\partial u}{\partial x_n} \frac{\partial u}{\partial \vec{\nu}} + \frac{\partial v}{\partial x_n} \frac{\partial v}{\partial \vec{\nu}} \Big\} d\mathcal{H}^{n-1}_{\partial Q_t} \\
        &= \int_{\partial Q_t} \Big\{ \frac{\partial u}{\partial x_n} \frac{\partial u}{\partial \vec{\nu}} + \frac{\partial v}{\partial x_n} \frac{\partial v}{\partial \vec{\nu}} \Big\} d\mathcal{H}^{n-1}_{\partial Q_t},
    \end{align*}
    where $\vec{\nu}$ denotes the outward unit normal on $\partial Q_t$. By Schauder estimates, $\frac{\partial u}{\partial \vec{\nu}}$ and $\frac{\partial v}{\partial \vec{\nu}}$ are uniformly bounded, so
    \begin{equation}\label{eq. dE/dt integral, s=1}
        \int_{-\infty}^{\infty}\Big|\frac{dE}{dt}\Big|dt\leq C\int_{-\infty}^{\infty}\int_{\partial Q_{t}}\Big\{\Big|\frac{\partial u}{\partial x_{n}}\Big|+\Big|\frac{\partial v}{\partial x_{n}}\Big|\Big\}d\mathcal{H}^{n-1}_{\partial Q_{t}}dt.
    \end{equation}

    To estimate the right-hand side, consider an arbitrary $x = (x', x_n) \in \mathbb{R}^n$:
    \begin{itemize}
        \item If its projection on $\mathbb{R}^{n-1}$, i.e.: $x'$, is outside $[-R,R]^{n}$, then $x$ does not contribute to the integral \eqref{eq. dE/dt integral, s=1}.
        \item If $x'\in(-R,R)^{n}$, then $x$ contributes twice to the integral \eqref{eq. dE/dt integral, s=1}, precisely: when $t=x_{n}\pm R$.
        \item If $x'\in\partial[-R,R]^{n}$, then $x$ contributes to the integral \eqref{eq. dE/dt integral, s=1} during a period of time, i.e.: $t\in[x_{n}-R,x_{n}+R]$.
    \end{itemize}

    Since $\mathcal{H}^{n-1}([-R,R]^{n-1})$ and $R \, \mathcal{H}^{n-2}(\partial[-R,R]^{n-1})$ are both of order $R^{n-1}$, we deduce
    \begin{align*}
        \int_{-\infty}^{\infty} \Big| \frac{dE}{dt} \Big| dt 
        &\le C R^{n-1} \sup_{x' \in [-R,R]^{n-1}} \int_{-\infty}^{\infty} \Big( \Big| \frac{\partial u}{\partial x_n}(x',x_n) \Big| + \Big| \frac{\partial v}{\partial x_n}(x',x_n) \Big| \Big) dx_n \\
        &\le 2 C R^{n-1} (b-a) \le 2 C R^{n-1},
    \end{align*}
    where we have used the monotonicity and boundedness of $u$ and $v$ in the $x_n$-direction. This completes the proof of Lemma~\ref{lem. evolution of E(t)}.
\end{proof}

\subsection{Limiting profile}
In this subsection, we analyze the limiting profile obtained by pushing the solution along the $\vec{e}_{n}$-direction to infinity. Since $(u,v)$ are monotone in the $\vec{e}_{n}$-direction, the limiting profile is well-defined pointwise, as described below.

\begin{definition}\label{def. limiting profile}
    We define the upper limiting profiles $\overline{u}, \overline{v} : \mathbb{R}^{n-1} \to \mathbb{R}$ by
    \[
        \overline{u}(x') := \lim_{x_n \to +\infty} u(x', x_n), \qquad 
        \overline{v}(x') := \lim_{x_n \to +\infty} v(x', x_n).
    \]
    Similarly, we define the lower limiting profiles $\underline{u}, \underline{v} : \mathbb{R}^{n-1} \to \mathbb{R}$ by
    \[
        \underline{u}(x') := \lim_{x_n \to -\infty} u(x', x_n), \qquad 
        \underline{v}(x') := \lim_{x_n \to -\infty} v(x', x_n).
    \]
\end{definition}

\begin{lemma}
    The limiting profiles $(\overline{u},\overline{v})$ and $(\underline{u},\underline{v})$ satisfy the following properties:
    \begin{itemize}
        \item[(1)] Both $(\overline{u},\overline{v})$ and $(\underline{u},\underline{v})$ have globally bounded derivatives of all orders. In particular, each of $\overline{u}, \overline{v}, \underline{u}, \underline{v}$ takes values in the interval $[a, b]$.
        \item[(2)] Both pairs $(\overline{u},\overline{v})$ and $(\underline{u},\underline{v})$ satisfy the system \eqref{eq. simplified B-E} in $\mathbb{R}^{\,n-1}$.
    \end{itemize}
\end{lemma}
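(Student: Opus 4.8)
The plan is to realize $(\overline u,\overline v)$ and $(\underline u,\underline v)$ as locally uniform limits of vertical translates of $(u,v)$ and then to pass to the limit in the equation. I will treat the $+\infty$ case in detail; the $-\infty$ case is identical after the reflection $x_n\mapsto -x_n$.

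First I would record the a priori bounds. By the range estimate of \cite{AFN21} (Proposition 2.1), the pair $(u(x),v(x))$ stays, for every $x$, inside the lens-shaped region of Figure~\ref{fig. range} bounded by $\{x^2+y^2=1\}$ and $\{xy=\omega/\alpha\}$; a one-line Lagrange-multiplier check on the circular arc and on the hyperbolic arc shows that the extreme values of the $x$-coordinate (and, by symmetry, of the $y$-coordinate) on this region are exactly $a$ and $b$, so $a\le u\le b$ and $a\le v\le b$ on all of $\mathbb{R}^n$. Since $(u,v)$ is bounded and solves the semilinear system \eqref{eq. simplified B-E} with polynomial, hence $C^\infty$, right-hand side, interior Schauder estimates together with the standard bootstrap give, for each $m$, a constant $C_m$ independent of the base point with $\|D^\beta u\|_{L^\infty(\mathbb{R}^n)}+\|D^\beta v\|_{L^\infty(\mathbb{R}^n)}\le C_m$ for all $|\beta|\le m$.

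Next, for $t>0$ set $(u^{(t)}(x',x_n),v^{(t)}(x',x_n)):=(u(x',x_n+t),v(x',x_n+t))$. Each $(u^{(t)},v^{(t)})$ solves \eqref{eq. simplified B-E} in $\mathbb{R}^n$ and obeys the same uniform derivative bounds $C_m$. Because $\partial u/\partial x_n>0>\partial v/\partial x_n$, the maps $s\mapsto u(x',s)$, $s\mapsto v(x',s)$ are monotone and bounded, so for each fixed $(x',x_n)$ we have $u^{(t)}(x',x_n)=u(x',x_n+t)\to\overline u(x')$ and $v^{(t)}(x',x_n)\to\overline v(x')$ as $t\to+\infty$, and these pointwise limits do not depend on $x_n$. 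Given any sequence $t_k\to+\infty$, the uniform $C^m$-bounds and Arzel\`a--Ascoli furnish a subsequence along which $(u^{(t_k)},v^{(t_k)})$ converges in $C^{m-1}_{\mathrm{loc}}(\mathbb{R}^n)$ for every $m$, hence in $C^\infty_{\mathrm{loc}}$, and by uniqueness of pointwise limits the limit is $(\overline u,\overline v)$. Therefore $(\overline u,\overline v)\in C^\infty(\mathbb{R}^n)$, it inherits $\|D^\beta\overline u\|_\infty+\|D^\beta\overline v\|_\infty\le C_m$, and passing to the limit in \eqref{eq. simplified B-E} (legitimate since the convergence is $C^2_{\mathrm{loc}}$ and $W\in C^\infty$) yields $\Delta\overline u=W_u(\overline u,\overline v)$ and $\Delta\overline v=W_v(\overline u,\overline v)$ in $\mathbb{R}^n$. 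Since $\overline u,\overline v$ are independent of $x_n$, the $x_n$-derivative drops out and the same identities hold with $\Delta$ the Laplacian on $\mathbb{R}^{n-1}$, so $(\overline u,\overline v)$ solves \eqref{eq. simplified B-E} in $\mathbb{R}^{n-1}$; and $a\le u,v\le b$ pointwise forces $a\le\overline u,\overline v\le b$. The $-\infty$ analysis gives the same conclusions for $(\underline u,\underline v)$.

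The argument is essentially soft, so there is no serious obstacle. The only two points that deserve care are (i) identifying the extreme coordinates of the admissible region in Figure~\ref{fig. range} as precisely $a$ and $b$, which is what pins the limiting profiles to $[a,b]$, and (ii) checking that the genuinely $x_n$-independent pointwise limit coincides with the $C^2_{\mathrm{loc}}$ subsequential limit, which is exactly what licenses the reduction from $\mathbb{R}^n$ to $\mathbb{R}^{n-1}$. The engine behind everything is the uniform-in-$t$ elliptic estimate, and that is where the boundedness supplied by \cite{AFN21} is indispensable.
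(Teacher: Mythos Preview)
Your proof is correct and follows essentially the same route as the paper: uniform range bounds from \cite{AFN21}, Schauder/bootstrap for uniform $C^m$ estimates, vertical translates $(u^{(t)},v^{(t)})$, Arzel\`a--Ascoli, and passage to the limit in the equation. You are simply more explicit than the paper on two points it leaves implicit, namely the identification of the extreme coordinate values of the admissible region as $a,b$ and the matching of the monotone pointwise limit with the $C^\infty_{\mathrm{loc}}$ subsequential limit; both are handled correctly.
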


\begin{proof}
    Since $u$ and $v$ are monotone in the $\vec{e}_{n}$ direction and by the range estimate in \cite[Proposition 1.2]{AFN21}, the limiting profiles $(\overline{u},\overline{v})$ and $(\underline{u},\underline{v})$ are well-defined and globally bounded between $a$ and $b$. Moreover, because all derivatives of $(u,v)$ are globally bounded in $\mathbb{R}^{n}$, the limits inherit these derivative bounds. 

    Indeed, consider the translated pair
    \begin{equation}\label{eq. translated pair}
        \big(u^{(t)}(x),v^{(t)}(x)\big):=\big(u(x',x_{n}+t),v(x',x_{n}+t)\big).
    \end{equation}
    By the Arzel\`{a}-Ascoli theorem, as $t \to \pm \infty$, this sequence converges to $(\overline{u},\overline{v})$ and $(\underline{u},\underline{v})$ (viewed as canonical extensions from $\mathbb{R}^{\,n-1}$ to $\mathbb{R}^{n}$) strongly in all $C^{k,\alpha}(\mathbb{R}^{n})$ spaces. 

    Finally, since each translated pair $(u^{(t)},v^{(t)})$ satisfies the system \eqref{eq. simplified B-E}, the limiting profiles $(\overline{u},\overline{v})$ and $(\underline{u},\underline{v})$ also satisfy \eqref{eq. simplified B-E} in $\mathbb{R}^{\,n-1}$.
\end{proof}

Next, we aim to show that the limiting profiles $(\overline{u},\overline{v})$ and $(\underline{u},\underline{v})$ are one-dimensional. To this end, it suffices to show the existence of functions $\varphi>0>\psi$ such that
\[
L \begin{bmatrix} \varphi \\ \psi \end{bmatrix} \in \overline{\mathbb{R}_{+}} \times \overline{\mathbb{R}_{-}},
\]
see Corollary~\ref{cor. n=2 is more obvious}.

\begin{lemma}\label{lem. classification of one limiting profile}
Let $n=3$. For the limiting profile $(\overline{u},\overline{v})$ given in Definition~\ref{def. limiting profile}, there exist two functions $\varphi>0>\psi$ defined on $\mathbb{R}^{2}$ such that
\begin{equation*}
\begin{cases}
-\Delta \varphi + W_{uu}(\overline{u},\overline{v})\,\varphi + W_{uv}(\overline{u},\overline{v})\,\psi \ge 0,\\[1ex]
-\Delta \psi + W_{vu}(\overline{u},\overline{v})\,\varphi + W_{vv}(\overline{u},\overline{v})\,\psi \le 0.
\end{cases}
\end{equation*}

Moreover, $(\overline{u},\overline{v})$ is one-dimensional. More precisely, one of the following holds:
\begin{itemize}
    \item $(\overline{u},\overline{v})$ is constant, taking one of the forms shown in Figure~\ref{fig. range}:
    \[
        (\overline{u},\overline{v}) \equiv (a,b), \quad (b,a), \quad \text{or} \quad \Bigl(\sqrt{\frac{1+\omega}{2+\alpha}}, \sqrt{\frac{1+\omega}{2+\alpha}}\Bigr).
    \]
    \item $(\overline{u},\overline{v})$ depends only on $x_{2}$ (up to a rotation in $\mathbb{R}^{2}$), with
    \[
        \frac{\partial \overline{u}}{\partial x_{2}} > 0 > \frac{\partial \overline{v}}{\partial x_{2}}.
    \]
\end{itemize}
\end{lemma}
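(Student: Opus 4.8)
The plan is to deduce the lemma from Corollary~\ref{cor. n=2 is more obvious}: it suffices to exhibit, for the $\mathbb{R}^{2}$-solution $(\overline u,\overline v)$, a pair $\varphi>0>\psi$ with $L\begin{bmatrix}\varphi\\\psi\end{bmatrix}\in\overline{\mathbb{R}_{+}}\times\overline{\mathbb{R}_{-}}$ (which is the first assertion), and then to classify the constant solutions; I would obtain the pair from a stability argument. First I would observe that $(u,v)$ itself is a stable critical point of the energy \eqref{eq. Ginzburg-Landau energy, s=1} in $\mathbb{R}^{3}$: taking $(\varphi,\psi):=(\partial_{x_{3}}u,\partial_{x_{3}}v)$, which is positive--negative by hypothesis and lies in the kernel of $L$ by Lemma~\ref{lem. derivative satisfies the Schrodinger equation} (so $\Delta\varphi=W_{uu}\varphi+W_{uv}\psi$, $\Delta\psi=W_{vu}\varphi+W_{vv}\psi$, coefficients at $(u,v)$), and writing a test pair $(\phi,\chi)\in C^{1}_{c}(\mathbb{R}^{3})^{2}$ as $(\phi,\chi)=(\varphi\sigma,\psi\tau)$ (legitimate since $\varphi,|\psi|>0$ on the compact support), the same integration by parts as in the proof of Lemma~\ref{lem. negative definite form} gives
\[
\int_{\mathbb{R}^{3}}\Big(|\nabla\phi|^{2}+|\nabla\chi|^{2}+W_{uu}\phi^{2}+2W_{uv}\phi\chi+W_{vv}\chi^{2}\Big)dx
=\int_{\mathbb{R}^{3}}\Big(\varphi^{2}|\nabla\sigma|^{2}+\psi^{2}|\nabla\tau|^{2}-W_{uv}\varphi\psi\,(\sigma-\tau)^{2}\Big)dx\;\ge\;0,
\]
since $W_{uv}>0$ by \eqref{eq. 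W_uv>0} and $\varphi\psi<0$. By the standard dimension-reduction for stability --- test the corresponding inequality for the translates $(u^{(t)},v^{(t)})$ with $\phi(x')\rho(x_{3})$, let $t\to+\infty$ so that the Hessian coefficients converge locally uniformly to those of $(\overline u,\overline v)$, then let $\int_{\mathbb{R}}(\rho')^{2}\to 0$ with $\int_{\mathbb{R}}\rho^{2}=1$ --- the limiting profile $(\overline u,\overline v)$, and likewise $(\underline u,\underline v)$, is stable in $\mathbb{R}^{2}$.

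Next I would construct $(\varphi,\psi)$ on $\mathbb{R}^{2}$ from this stability by exhausting $\mathbb{R}^{2}$ with balls. For $R>1$ let $\mu_{R}$ be the bottom of the quadratic form $\int_{B_{R}}\bigl(|\nabla\phi|^{2}+|\nabla\chi|^{2}+W_{uu}\phi^{2}+2W_{uv}\phi\chi+W_{vv}\chi^{2}\bigr)$ over $H^{1}_{0}(B_{R})^{2}$, with the coefficients evaluated at $(\overline u,\overline v)$; stability gives $\mu_{R}\ge 0$, and domain monotonicity makes $\mu_{R}$ non-increasing, so $\mu_{R}\downarrow\mu_{\infty}\ge 0$. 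A minimizer $(\varphi_{R},\psi_{R})$ exists and solves the eigensystem $L\begin{bmatrix}\varphi_{R}\\\psi_{R}\end{bmatrix}=\mu_{R}\begin{bmatrix}\varphi_{R}\\\psi_{R}\end{bmatrix}$ on $B_{R}$; replacing it by $(|\varphi_{R}|,-|\psi_{R}|)$, which does not increase the form --- here the positivity $W_{uv}=(2+2\alpha)\overline u\,\overline v-\omega\ge(\tfrac{2}{\alpha}+1)\omega>0$ enters, the range estimate $\overline u\,\overline v\ge\frac{\omega}{\alpha}$ of \cite[Proposition 1.2]{AFN21} passing to the limit --- and invoking the strong maximum principle for cooperative systems, we may take $\varphi_{R}>0>\psi_{R}$. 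Normalizing $\|\varphi_{R}\|_{L^{\infty}(\overline{B_{1}})}+\|\psi_{R}\|_{L^{\infty}(\overline{B_{1}})}=1$, the Harnack inequality for fully coupled cooperative systems (applied to $(\varphi_{R},-\psi_{R})$) propagates the bound to every $B_{\rho}$, hence to $C^{k,\alpha}_{\mathrm{loc}}$ bounds by interior Schauder estimates; a diagonal extraction yields $R_{j}\to\infty$ with $(\varphi_{R_{j}},\psi_{R_{j}})\to(\varphi,\psi)$ in $C^{k}_{\mathrm{loc}}(\mathbb{R}^{2})$. The limit is nontrivial (its $L^{\infty}(\overline{B_{1}})$ norms still sum to $1$), has $\varphi\ge 0\ge\psi$, and satisfies $L\begin{bmatrix}\varphi\\\psi\end{bmatrix}=\mu_{\infty}\begin{bmatrix}\varphi\\\psi\end{bmatrix}$; the strong maximum principle for cooperative systems then forces $\varphi>0>\psi$ everywhere, and since $\mu_{\infty}\ge 0$ we get $L\begin{bmatrix}\varphi\\\psi\end{bmatrix}=(\mu_{\infty}\varphi,\mu_{\infty}\psi)\in\overline{\mathbb{R}_{+}}\times\overline{\mathbb{R}_{-}}$, which is the first assertion of the lemma.

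Finally, Corollary~\ref{cor. n=2 is more obvious} applied to $(\overline u,\overline v)$ with this $(\varphi,\psi)$ shows $(\overline u,\overline v)$ is constant or, after a rotation, one-dimensional in $x_{2}$ with $\partial_{x_{2}}\overline u>0>\partial_{x_{2}}\overline v$. If $(\overline u,\overline v)\equiv(p,q)$ with $p,q>0$, subtracting the two lines of \eqref{eq. main} gives $(p-q)(p^{2}+q^{2}-\alpha pq-1+\omega)=0$ and adding them gives $p^{2}+q^{2}+\alpha pq=1+\omega$; hence either $p=q=\sqrt{\tfrac{1+\omega}{2+\alpha}}$, or $pq=\tfrac{\omega}{\alpha}$ and $p^{2}+q^{2}=1$, i.e.\ $(p,q)\in\{(a,b),(b,a)\}$. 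This is the claimed trichotomy.

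The main obstacle is the construction in the second paragraph, and precisely the danger that the limiting pair degenerates with one component vanishing identically --- in which case it would be useless for Corollary~\ref{cor. n=2 is more obvious}. The point that saves the argument is the sign $W_{uv}>0$, furnished by the range estimate \cite[Proposition 1.2]{AFN21}: after the reflection $\psi\mapsto-\psi$ the linearized system becomes uniformly cooperative and fully coupled, so the Harnack inequality keeps the two components comparable on compact sets and the strong maximum principle makes the limiting pair strictly sign-definite of opposite signs (and is also what makes the ground state on $B_{R}$ have components of opposite sign in the first place). The dimension-reduction transferring stability from $\mathbb{R}^{3}$ to $\mathbb{R}^{2}$ is a further technical point, but a routine one.
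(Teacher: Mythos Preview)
Your proposal is correct and follows essentially the same route as the paper's proof: establish stability of $(u,v)$ in $\mathbb{R}^{3}$ via $(\partial_{x_{3}}u,\partial_{x_{3}}v)$ and the sign $W_{uv}>0$, reduce to stability of $(\overline u,\overline v)$ in $\mathbb{R}^{2}$ by the cutoff-in-$x_{3}$ argument, extract $(\varphi,\psi)$ as a limit of first eigenpairs on balls (using $W_{uv}>0$ to force opposite signs), and then invoke Corollary~\ref{cor. n=2 is more obvious}. The only differences are technical: the paper normalizes $\varphi_{R}(0)-\psi_{R}(0)=1$ and applies the \emph{scalar} Harnack inequality to the single function $\mathcal{D}_{R}=\varphi_{R}-\psi_{R}$ (then bounds $\varphi_{R},\psi_{R}$ individually via $0\le\varphi_{R},|\psi_{R}|\le\mathcal{D}_{R}$), whereas you normalize in $L^{\infty}(B_{1})$ and call the Harnack inequality for fully coupled cooperative systems directly---an alternative the paper itself points to in the remark following the proof (Chen--Zhao \cite{CZ97}).
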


\begin{proof}
    We consider a family of functionals related to the translated pair $(u^{(t)},v^{(t)})$ in \eqref{eq. translated pair}. We also denote
    \begin{equation*}
        W^{(t)}=W(u^{(t)},v^{(t)}).
    \end{equation*}
    Let $(\xi,\eta)$ be a pair of compactly supported smooth functions in $\mathbb{R}^{3}$, and let
    \begin{equation}\label{eq. E_t xi eta s=1}
        \mathcal{E}_{t}(\xi,\eta):=\int_{\mathbb{R}^{3}}\Big\{|\nabla\xi|^{2}+|\nabla\eta|^{2}+\begin{bmatrix}\xi\\\eta\end{bmatrix}^{T}D^{2}W^{(t)}\begin{bmatrix}\xi\\\eta\end{bmatrix}\Big\}dx.
    \end{equation}
    
    \textbf{Step 1: Positivity of $\mathcal{E}_{t}(\xi,\eta)$.} We prove that $\mathcal{E}_{t}(\xi,\eta)$ is always non-negative. Recall that $\displaystyle\frac{\partial u^{(t)}}{\partial x_{3}}>0>\frac{\partial v^{(t)}}{\partial x_{3}}$ by the monotonicity assumption, and that $\bigl(\frac{\partial u^{(t)}}{\partial x_{3}},\frac{\partial v^{(t)}}{\partial x_{3}}\bigr)$ is a solution to the Schr\"odinger system
    \begin{equation*}
        \left\{\begin{aligned}
            -\Delta\frac{\partial u^{(t)}}{\partial x_{3}}+W_{uu}^{(t)}\frac{\partial u^{(t)}}{\partial x_{3}}+W_{uv}^{(t)}\frac{\partial v^{(t)}}{\partial x_{3}}=0,\\
            -\Delta\frac{\partial v^{(t)}}{\partial x_{3}}+W_{vu}^{(t)}\frac{\partial u^{(t)}}{\partial x_{3}}+W_{vv}^{(t)}\frac{\partial v^{(t)}}{\partial x_{3}}=0.
        \end{aligned}\right.
    \end{equation*}
    as established in Lemma~\ref{lem. derivative satisfies the Schrodinger equation}.
    
        Set
    \begin{equation*}
        (\sigma,\tau)=\Big(\frac{\xi}{\partial_{x_{3}}u^{(t)}},\frac{\eta}{\partial_{x_{3}}v^{(t)}}\Big).
    \end{equation*}
    Since  $\displaystyle\frac{\partial u}{\partial x_{3}}>0>\frac{\partial v}{\partial x_{3}}$ and that $(\xi,\eta)$ is compactly supported, we have that $(\sigma,\tau)$ is smooth and compactly supported in $\mathbb{R}^{3}$.  Then, a straightforward computation gives
    \begin{align*}
        \mathcal{E}_{t}(\xi,\eta)=&\int_{\mathbb{R}^{3}}\Big\{\Big|\frac{\partial u^{(t)}}{\partial x_{3}}\nabla\sigma\Big|^{2}+\nabla\frac{\partial u^{(t)}}{\partial x_{3}}\cdot\nabla(\sigma^{2}\frac{\partial u^{(t)}}{\partial x_{3}})+\Big|\frac{\partial v^{(t)}}{\partial x_{3}}\nabla\tau\Big|^{2}+\nabla\frac{\partial v^{(t)}}{\partial x_{3}}\cdot\nabla(\tau^{2}\frac{\partial v^{(t)}}{\partial x_{3}})\Big\}dx\\
        &+\int_{\mathbb{R}^{3}}\Big\{W_{uu}^{(t)}\Big(\frac{\partial u^{(t)}}{\partial x_{3}}\sigma\Big)^{2}+W_{vv}^{(t)}\Big(\frac{\partial v^{(t)}}{\partial x_{3}}\tau\Big)^{2}+2W_{uv}^{(t)}\Big(\frac{\partial u^{(t)}}{\partial x_{3}}\sigma\Big)\Big(\frac{\partial v^{(t)}}{\partial x_{3}}\tau\Big)\Big\}dx\\
        =&\int_{\mathbb{R}^{3}}\Big\{\Big|\frac{\partial u^{(t)}}{\partial x_{3}}\nabla\sigma\Big|^{2}+\Big|\frac{\partial v^{(t)}}{\partial x_{3}}\nabla\tau\Big|^{2}\Big\}dx-\int_{\mathbb{R}^{3}}\Big\{\sigma^{2}\frac{\partial u^{(t)}}{\partial x_{3}}\Delta\frac{\partial u^{(t)}}{\partial x_{3}}+\tau^{2}\frac{\partial v^{(t)}}{\partial x_{3}}\Delta\frac{\partial v^{(t)}}{\partial x_{3}}\Big\}dx\\
        &+\int_{\mathbb{R}^{3}}\Big\{\Big(\sigma^{2}\frac{\partial u^{(t)}}{\partial x_{3}}\Big)W_{uu}^{(t)}\frac{\partial u^{(t)}}{\partial x_{3}}+\Big(\tau^{2}\frac{\partial v^{(t)}}{\partial x_{3}}\Big)W_{vv}^{(t)}\frac{\partial v^{(t)}}{\partial x_{3}}+2\sigma\tau W_{uv}^{(t)}\frac{\partial u^{(t)}}{\partial x_{3}}\frac{\partial v^{(t)}}{\partial x_{3}}\Big\}dx\\
        =&\int_{\mathbb{R}^{3}}\Big\{\Big|\frac{\partial u^{(t)}}{\partial x_{3}}\nabla\sigma\Big|^{2}-(\sigma-\tau)^{2}W_{uv}^{(t)}\frac{\partial u^{(t)}}{\partial x_{3}}\frac{\partial v^{(t)}}{\partial x_{3}}\Big\}dx\geq0.
    \end{align*}
    Here, we have again used the fact that $\displaystyle W_{uv}^{(t)}\frac{\partial u^{(t)}}{\partial x_{3}}\frac{\partial v^{(t)}}{\partial x_{3}}\leq0$, see the discussion in \eqref{eq. W_uv>0}.

\textbf{Step 2: Sending $t \to +\infty$.}  
We define $\mathcal{E}_{+\infty}(\xi,\eta)$ analogously to \eqref{eq. E_t xi eta s=1}, replacing $D^{2}W^{(t)}$ with
\begin{equation*}
D^{2}W^{(+\infty)} := D^{2}W(u^{(+\infty)},v^{(+\infty)}),
\end{equation*}
where 
\[
u^{(+\infty)}(x',x_{3}) := \overline{u}(x'), \qquad v^{(+\infty)}(x',x_{3}) := \overline{v}(x').
\]  
Since we have already shown that $\mathcal{E}_{t}(\xi,\eta) \ge 0$, passing to the limit yields
\[
\mathcal{E}_{+\infty}(\xi,\eta) \ge 0
\]  
for every compactly supported smooth pair $(\xi,\eta)$.  

Next, we set
\begin{equation*}
D^{2}\overline{W} := D^{2}W(\overline{u},\overline{v}),
\end{equation*}
which is defined on $\mathbb{R}^{n-1} = \mathbb{R}^{2}$. Let $(\xi,\eta)$ be a pair of compactly supported functions in $\mathbb{R}^{2}$, and define
\begin{equation*}
\overline{\mathcal{E}}(\xi,\eta) := \int_{\mathbb{R}^{2}} \Biggl\{ |\nabla \xi|^{2} + |\nabla \eta|^{2} + 
\begin{bmatrix} \xi \\ \eta \end{bmatrix}^{T} D^{2} \overline{W} \begin{bmatrix} \xi \\ \eta \end{bmatrix} \Biggr\} dx.
\end{equation*}

\textbf{Step 3: Positivity of $\overline{\mathcal{E}}(\xi,\eta)$.}  
We now show that $\overline{\mathcal{E}}(\xi,\eta) \ge 0$ for any compactly supported pair $(\xi,\eta)$ in $\mathbb{R}^{2}$.  

Suppose, on the contrary, that $\overline{\mathcal{E}}(\xi,\eta) < 0$ for some $(\xi,\eta)$. We then denote
\begin{equation}\label{eq. -T and diameter}
\overline{\mathcal{E}}(\xi,\eta) = -\mathcal{T} < 0, \quad 
\mathcal{S} := \operatorname{supp}(\xi) \cup \operatorname{supp}(\eta) \subset\subset \mathbb{R}^{2}, \quad 
\rho := \sup_{x' \in \mathcal{S}} |x'|.
\end{equation}

Let $0 \le Z_{R}(x_{3}) \le 1$ be a smooth cut-off function such that
\[
Z_{R}(x_{3}) \equiv 1 \text{ for } |x_{3}| \le R, \quad
Z_{R}(x_{3}) \equiv 0 \text{ for } |x_{3}| \ge R+1, \quad
|\nabla Z_{R}| \le C.
\]  
Define
\[
(\xi_{R}(x),\eta_{R}(x)) := (\xi(x') Z_{R}(x_{3}), \eta(x') Z_{R}(x_{3})),
\]  
which is a compactly supported pair in $\mathbb{R}^{3}$. Then, by Step 2, we have
\[
\mathcal{E}_{+\infty}(\xi_{R},\eta_{R}) \ge 0 \quad \text{for all } R \ge 1.
\]

For $R$ sufficiently large, we can decompose
\begin{align*}
\mathcal{E}_{+\infty}(\xi_{R},\eta_{R}) 
&= \int_{-R}^{R} \int_{\mathbb{R}^{2}} \Biggl\{ |\nabla \xi|^{2} + |\nabla \eta|^{2} + 
\begin{bmatrix}\xi\\\eta\end{bmatrix}^{T} D^{2}\overline{W} \begin{bmatrix}\xi\\\eta\end{bmatrix} \Biggr\} dx' dx_{3} \\
&\quad + \Biggl\{ \int_{-R-1}^{-R} + \int_{R}^{R+1} \Biggr\} \int_{\mathbb{R}^{2}} 
\Biggl\{ |\nabla \xi_{R}|^{2} + |\nabla \eta_{R}|^{2} + 
\begin{bmatrix}\xi_{R}\\\eta_{R}\end{bmatrix}^{T} D^{2}\overline{W} \begin{bmatrix}\xi_{R}\\\eta_{R}\end{bmatrix} \Biggr\} dx' dx_{3} \\
&\le -2 R \mathcal{T} + C(\xi,\eta) < 0,
\end{align*}
which is a contradiction. Here, $C(\xi,\eta)$ is a constant depending only on 
\(\|\xi\|_{C^{0,1}(\mathbb{R}^{2})} + \|\eta\|_{C^{0,1}(\mathbb{R}^{2})}\) 
and the radius \(\rho\) defined in \eqref{eq. -T and diameter}.

    \textbf{Step 4: Construction of $(\varphi,\psi)$.} In this step, we intend to consruct two entire functions $\varphi>0>\psi$ defined on $\mathbb{R}^{2}$, such that the following Schr\"odinger inequalities holds:
    \begin{equation}\label{eq. varphi and psi Schrodinger inequality}
        \left\{\begin{aligned}
            &-\Delta\varphi+\overline{W}_{uu}\varphi+\overline{W}_{uv}\psi\geq0,\\
            &-\Delta\psi+\overline{W}_{vu}\varphi+\overline{W}_{vv}\psi\leq0.
        \end{aligned}\right.
    \end{equation}
    
    As we have established that $\overline{\mathcal{E}}$ is positive semi-definite for all compactly supported pairs $(\xi,\eta)$, we can consider the following eigenvalue problem. For every sufficiently large radius $R$, we intend to find the minimizers of the following functional (the value must be non-negative):
    \begin{equation}\label{eq. eigenvalue functional}
        \frac{\overline{\mathcal{E}}(\xi,\eta)}{\int_{\mathbb{R}^{2}}(|\xi|^{2}+|\eta|^{2})dx},\quad\mbox{where }(\xi,\eta)\in C^{\infty}_{0}(B_{R}).
    \end{equation}
    Let $\lambda_{R}$ be the infimum of such a functional. Then $\lambda_{R}\geq0$ and is non-increasing in $R$. In particular, $\lambda_{R}$ is uniformly bounded for $R\geq100$. Moreover, there exists an eigen-pair $(\varphi_{R},\psi_{R})\in C^{\infty}_{0}(B_{R})$, such that
    \begin{equation*}
        \frac{\overline{\mathcal{E}}(\varphi_{R},\psi_{R})}{\int_{\mathbb{R}^{2}}(|\varphi_{R}|^{2}+|\psi_{R}|^{2})dx}=\inf_{(\xi,\eta)\in C^{\infty}_{0}(B_{R})}\frac{\overline{\mathcal{E}}(\xi,\eta)}{\int_{\mathbb{R}^{2}}(|\xi|^{2}+|\eta|^{2})dx}=\lambda_{R}.
    \end{equation*}
    Notice that in the expression of $\overline{\mathcal{E}}(\varphi_{R},\psi_{R})$, we have $\overline{W}_{uv}>0$. It is then beneficial to replace $(\varphi_{R},\psi_{R})$ with $(|\varphi_{R}|,-|\psi_{R}|)$, which does not increase the functional \eqref{eq. eigenvalue functional}. Therefore, without loss of generality, we assume that $(\varphi_{R},\psi_{R})$ is chosen such that
    \begin{equation*}
        \varphi_{R}\geq0\geq\psi_{R}\mbox{ in }B_{R},\quad\varphi_{R}=\psi_{R}=0\mbox{ in }B_{R}^{c}.
    \end{equation*}
    The Euler-Lagrange equation satisfied by the minimizing pair $(\varphi_{R},\psi_{R})$ in $B_{R}$ are
    \begin{equation}\label{eq. varphi_R and psi_R equation}
        \left\{\begin{aligned}
            &-\Delta\varphi_{R}+\overline{W}_{uu}\varphi_{R}+\overline{W}_{uv}\psi_{R}=\lambda_{R}\varphi_{R},\\
            &-\Delta\psi_{R}+\overline{W}_{vu}\varphi_{R}+\overline{W}_{vv}\psi_{R}=\lambda_{R}\psi_{R}.
        \end{aligned}\right.
    \end{equation}
    
    We observe that the difference
    \begin{equation*}
        \mathcal{D}_{R}=\varphi_{R}-\psi_{R}
    \end{equation*}
    is non-negative in $\mathbb{R}^{2}$  and does not vanish identically in $B_R.$
  Moreover, by subtracting the two equations in \eqref{eq. varphi_R and psi_R equation}, we see that $\mathcal{D}_{R}$ satisfies the following inequality in $B_{R}$:
    \begin{equation*}
        -C\mathcal{D}_{R}\leq\Delta\mathcal{D}_{R}\leq C\mathcal{D}_{R},\quad C:=100\max_{R\geq100}\lambda_{R}+100\|D^{2}\overline{W}\|_{L^{\infty}(\mathbb{R}^{2})}.
    \end{equation*}
    By the strong maximum principle, it follows that $\mathcal{D}_{R}$ must be strictly positive in $B_{R}$. This allows us to normalize the pair $(\varphi_{R},\psi_{R})$ such that 
    $$\mathcal{D}_{R}(0)=\varphi_{R}(0)-\psi_{R}(0)=1.$$  
    We apply the Harnack principle to $\mathcal{D}_{R}$, and then employ the Schauder estimate to $\varphi_{R}$ and $\psi_{R}$, and have the following universal estimate depending minimally on $R$.  Specifically, there exists a constant $C_{\rho}$ depending only on $\rho$, such that for every $\rho\geq100$ and $R\geq2\rho$, we have
 \begin{itemize}
        \item[(1)] $C_{\rho}^{-1}\leq\mathcal{D}_{R}(x)\leq C_{\rho}$ for all $x\in B_{\rho}$.
        \item[(2)] $\|\varphi_{R}\|_{C^{2,\alpha}(B_{\rho})}\leq C_{\rho}$ and $\|\psi_{R}\|_{C^{2,\alpha}(B_{\rho})}\leq C_{\rho}$.
    \end{itemize}
    In fact, we have used the assumption $\varphi_{\rho}\geq0\geq\psi_{\rho}$ in (2), which allows us to control $\|\varphi_{R}\|_{L^{\infty}(B_{\rho})}$ and $\|\psi_{R}\|_{L^{\infty}(B_{\rho})}$ using $\|\mathcal{D}_{R}\|_{L^{\infty}(B_{\rho})}$.

   We can then pass the limit as  $R\to\infty$. By a diagonal selection argument, we see that there exists a subsequence $\rho_{k}\to\infty$, such that
    \begin{equation*}
        (\varphi_{\rho_{k}},\psi_{\rho_{k}})\to(\varphi,\psi)\mbox{ in the }C^{2,\alpha}(B_{R})\mbox{ sense, for all }R\geq100.
    \end{equation*}
    We have that $\mathcal{D}_{\infty}=\varphi-\psi$ is nowhere zero, since $\mathcal{D}_{R}\geq C_{\rho}^{-1}$ in each $B_{\rho}$ for large $R$. Moreover, $\varphi\geq0\geq\psi$, and the pair $(\varphi,\psi)$ satisfies the limiting equation of \eqref{eq. varphi_R and psi_R equation}, which reads as:
    \begin{equation*}
        \left\{\begin{aligned}
            &-\Delta\varphi+\overline{W}_{uu}\varphi+\overline{W}_{uv}\psi=\lambda_{\infty}\varphi\geq0,\\
            &-\Delta\psi+\overline{W}_{vu}\varphi+\overline{W}_{vv}\psi=\lambda_{\infty}\psi\leq0,
        \end{aligned}\right.\quad\mbox{where }\lambda_{\infty}:=\lim_{R\to\infty}\lambda_{R}\geq0.
    \end{equation*}
    This verifies the inequality \eqref{eq. varphi and psi Schrodinger inequality}.
    
    Finally, it remains to show that $\varphi$ and $\psi$ are both nowhere zero. To verify this, we argue by contradiction. Without loss of generality, suppose that $\varphi(x'_{*})=0$ for some $x'_{*}\in\mathbb{R}^{2}$. By the positivity of $\overline{W}_{uv}$, the negativity of $\psi$, and the global boundedness of $\overline{W}_{uu}$, we have
    \begin{equation*}
        \Delta\varphi\leq\overline{W}_{uu}\varphi\leq\|\overline{W}_{uu}\|_{L^{\infty}(\mathbb{R}^{2})}\cdot\varphi.
    \end{equation*}
    If we assume $\varphi(x'_{*})=0$ for some $x'_{*}\in\mathbb{R}^{2}$, then it follows from the strong maximum principle that $\varphi\equiv0$ in $\mathbb{R}^{2}$. As a result, $\psi(x')\equiv-\mathcal{D}_{\infty}(x')>0$ everywhere in $\mathbb{R}^{2}$, and thus
    \begin{equation*}
        \Delta\varphi\leq\overline{W}_{uv}\psi<0\mbox{ everywhere in }\mathbb{R}^{2}.
    \end{equation*}
    This violates the previously obtained consequence $\varphi\equiv0$, so we have reached a contradiction, meaning that we have shown that $\varphi>0$ in $\mathbb{R}^{2}$. Similarly, we can show that $\psi<0$ in $\mathbb{R}^{2}$.

    \textbf{Step 5: Classification of the limiting profile.} We now describe the properties of the limiting profile $(\overline{u},\overline{v})$. Using the pair $(\varphi,\psi)$, we can apply Corollary~\ref{cor. n=2 is more obvious} to $(\overline{u},\overline{v})$. It follows  that either $(\overline{u},\overline{v})$ is constant, or $(\overline{u},\overline{v})$ depends only on $x_{2}$ (up to a rotation in $\mathbb{R}^{2}$). In the first case, the only constant positive solutions are given by (see Figure~\ref{fig. range}):
    \begin{equation*}
        (\overline{u},\overline{v})\equiv(a,b),\mbox{ or }(b,a),\mbox{ or }(\sqrt{\frac{1+\omega}{2+\alpha}},\sqrt{\frac{1+\omega}{2+\alpha}}).
    \end{equation*}
    In the second case, the monotonicity in the $x_{2}$ direction also follows directly from Corollary~\ref{cor. n=2 is more obvious}.
\end{proof}
\begin{remark}
    In Step 4 of the proof of Lemma~\ref{lem. classification of one limiting profile}, one can also refer to Chen-Zhao \cite{CZ97} for the Harnack principle for coupled systems.
\end{remark}
It is natural to expect that  Lemma~\ref{lem. classification of one limiting profile} also applies to  the other limiting profile $(\underline{u},\underline{v})$. Consequently, we obtain the following description of the behavior of the two limiting profiles.
\begin{lemma}\label{lem. classification of two limiting profiles}
    Let $(\overline{u},\overline{v})$ and $(\underline{u},\underline{v})$ be the two limiting profiles as defined in Definition~\ref{def. limiting profile}. Assume that  
    $$\displaystyle\frac{\partial u}{\partial x_{3}}>0>\frac{\partial v}{\partial x_{3}}$$ in $\mathbb{R}^{3}$. Tthen at least one of the following  cases occurs:
    \begin{itemize}
        \item Case 1: $(\overline{u},\overline{v})\equiv(b,a)$.
        \item Case 2: $(\underline{u},\underline{v})\equiv(a,b)$.
        \item Case 3: Up to a rotation, $(\overline{u},\overline{v})$ depends only on $x_{2}$ and $\displaystyle\frac{\partial\overline{u}}{\partial x_{2}}>0>\frac{\partial\overline{v}}{\partial x_{2}}$.
        \item Case 4: Up to a rotation, $(\underline{u},\underline{v})$ depends only on $x_{2}$ and $\displaystyle\frac{\partial\underline{u}}{\partial x_{2}}>0>\frac{\partial\underline{v}}{\partial x_{2}}$.
    \end{itemize}
    In Cases 1 and 3, we have that the Ginzburg-Landau energy of the pair $(\overline{u},\overline{v})$ in $\mathbb{R}^{2}$, as defined in \eqref{eq. Ginzburg-Landau energy, s=1}, satisfies the growth estimate 
    \begin{equation}\label{eq. limiting profile GL energy growth}
        J(\overline{u},\overline{v},B_{R}')\leq CR,\quad\mbox{for all }R\geq100.
    \end{equation}
    where  $B_{R}'$ represents the disc in $\mathbb{R}^{2}$ with radius $R$.
    
    In Cases 2 and  4, we have a similar estimate for the pair $(\underline{u},\underline{v})$, i.e.:
    \begin{equation*}
        J(\underline{u},\underline{v},B_{R}')\leq CR,\quad\mbox{for all }R\geq100.
    \end{equation*}
\end{lemma}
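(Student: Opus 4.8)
The plan is to reduce everything to the single–profile classification of Lemma~\ref{lem. classification of one limiting profile}, then use the strict $\vec{e}_{3}$–monotonicity of $(u,v)$ to decide which combinations of constant steady states can occur, and finally prove the energy bound by a slicing argument combined with a Hamiltonian/stable–manifold analysis of the reduced one–dimensional profile. First I would apply Lemma~\ref{lem. classification of one limiting profile} to $(\overline{u},\overline{v})$ and its mirror version to $(\underline{u},\underline{v})$; the latter follows by applying Lemma~\ref{lem. classification of one limiting profile} to the reflected–exchanged pair $\bigl(v(x',-x_{3}),u(x',-x_{3})\bigr)$, which again solves \eqref{eq. simplified B-E}, satisfies $\partial_{x_{3}}(\cdot)>0>\partial_{x_{3}}(\cdot)$, and has upper limiting profile $(\underline{v},\underline{u})$. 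This shows that each of $(\overline{u},\overline{v})$ and $(\underline{u},\underline{v})$ is either one of the three constants $(a,b)$, $(b,a)$, $\bigl(\sqrt{\tfrac{1+\omega}{2+\alpha}},\sqrt{\tfrac{1+\omega}{2+\alpha}}\bigr)$, or, after a rotation of $\mathbb{R}^{2}$, a strictly monotone profile depending only on $x_{2}$. If the rotated alternative holds for $(\overline{u},\overline{v})$ we are in Case~3, and if it holds for $(\underline{u},\underline{v})$ we are in Case~4, so it remains to analyze the case where both limiting profiles are constant.

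In that case I would use strict monotonicity: since $\partial_{x_{3}}u>0>\partial_{x_{3}}v$, each $u(x',\cdot)$ is strictly increasing and each $v(x',\cdot)$ strictly decreasing, hence $\underline{u}<\overline{u}$ and $\underline{v}>\overline{v}$ pointwise, which for constant profiles means the first coordinate strictly increases and the second strictly decreases as one passes from $(\underline{u},\underline{v})$ to $(\overline{u},\overline{v})$. Recording the orderings $a<\sqrt{\tfrac{1+\omega}{2+\alpha}}<b$ — which follow from $a^{2}<ab=\tfrac{\omega}{\alpha}<\tfrac{1+\omega}{2+\alpha}<\tfrac12<b^{2}$, each step reducing to the hypothesis $2\omega<\alpha$ — the only admissible constant pairs $\bigl((\underline{u},\underline{v}),(\overline{u},\overline{v})\bigr)$ are $\bigl((a,b),(b,a)\bigr)$, $\bigl((a,b),(\sqrt{\tfrac{1+\omega}{2+\alpha}},\sqrt{\tfrac{1+\omega}{2+\alpha}})\bigr)$ and $\bigl((\sqrt{\tfrac{1+\omega}{2+\alpha}},\sqrt{\tfrac{1+\omega}{2+\alpha}}),(b,a)\bigr)$, and in every one of them $(\overline{u},\overline{v})\equiv(b,a)$ (Case~1) or $(\underline{u},\underline{v})\equiv(a,b)$ (Case~2). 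The pair with both profiles equal to the diagonal state is excluded, since it would force $u\equiv v\equiv\sqrt{\tfrac{1+\omega}{2+\alpha}}$, contradicting $\partial_{x_{3}}u>0$; this settles the dichotomy.

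For the energy bound it suffices to treat Cases~1 and~3, because Cases~2 and~4 follow from the reflection–exchange symmetry above, which preserves \eqref{eq. simplified B-E}, the monotonicity hypothesis, and the functional $J$. Case~1 is immediate: $(b,a)$ lies on both $x^{2}+y^{2}=1$ and $xy=\tfrac{\omega}{\alpha}$, so $W(b,a)=0$ and $J(\overline{u},\overline{v},B_{R}')=0$. In Case~3, after the rotation write $(\overline{u},\overline{v})=(U(x_{2}),V(x_{2}))$; integrating the one–variable energy density over the slices $B_{R}'\cap\{x_{2}=t\}$ and bounding each slice length by $2R$ gives $J(\overline{u},\overline{v},B_{R}')\le 2R\int_{\mathbb{R}}\bigl\{\tfrac12(U'^{2}+V'^{2})+W(U,V)\bigr\}\,dt$, so the claim reduces to finiteness of the one–dimensional energy. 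For that I would use the conserved Hamiltonian $\tfrac12(U'^{2}+V'^{2})-W(U,V)$: since $U,V$ are bounded and monotone with bounded second derivatives, $U',V'\to0$ at $\pm\infty$, so the two endpoints of the trajectory are critical points of $W$ with the same $W$–value; as $W=0$ at $(a,b),(b,a)$ while $W>0$ at the diagonal state, and a double diagonal endpoint is incompatible with $U'>0$, the profile must connect $(a,b)$ to $(b,a)$, the Hamiltonian vanishes, and the energy density equals $U'^{2}+V'^{2}$. Finally $D^{2}W$ at $(a,b)$ and at $(b,a)$ has trace $2+\alpha$ and determinant $2\alpha(a^{2}-b^{2})^{2}$, hence is positive definite, so the stable–manifold theorem yields $|U'|+|V'|\le Ce^{-c|t|}$ and $\int_{\mathbb{R}}(U'^{2}+V'^{2})\,dt<\infty$.

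I expect the main obstacle to be the Case~3 energy estimate — pinning down the endpoints of the limiting one–dimensional profile via the Hamiltonian (ruling out the diagonal steady state), and upgrading the mere monotone convergence of $(U,V)$ to exponential convergence through the positive–definiteness of $D^{2}W$ at the stable states $(a,b)$ and $(b,a)$. Everything else — the single–profile classification, the reflection–exchange symmetry, and the elementary bookkeeping with the three constant steady states under the ordering $a<\sqrt{\tfrac{1+\omega}{2+\alpha}}<b$ — is routine.
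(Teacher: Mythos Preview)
Your classification of the four cases (Step~1) is essentially the paper's argument: both apply Lemma~\ref{lem. classification of one limiting profile} to each profile and then use the strict $x_{3}$--monotonicity together with the ordering $a<\sqrt{\tfrac{1+\omega}{2+\alpha}}<b$ to eliminate the bad combinations of constant steady states. Your reflection--exchange trick to handle $(\underline{u},\underline{v})$ is a clean way to say ``similarly''.

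For the energy bound in Case~3 you take a genuinely different route. The paper stays within its own framework: it notes that the one--dimensional pair $(U,V)$ again has (now zero--dimensional) limiting profiles, repeats the Step~1 argument to conclude that at least one endpoint is $(b,a)$ or $(a,b)$, and then applies the one--dimensional instance of Lemma~\ref{lem. evolution of E(t)} to get $J(U,V,[-R,R])\le C$ directly, without ever identifying the other endpoint or appealing to ODE structure. Your argument instead invokes the conserved Hamiltonian $\tfrac12(U'^{2}+V'^{2})-W(U,V)$ to force \emph{both} endpoints to lie at the $W=0$ level, hence to be $(a,b)$ and $(b,a)$, and then uses the positive--definiteness of $D^{2}W$ at those points together with the stable--manifold theorem to obtain exponential decay of $(U',V')$. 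This is correct and in fact yields a sharper description of the one--dimensional profile, but it brings in extra machinery; the paper's reuse of Lemma~\ref{lem. evolution of E(t)} is more self--contained. Incidentally, once you know the Hamiltonian vanishes, the stable--manifold step is unnecessary: the energy density equals $(U')^{2}+(V')^{2}$, and $\int_{\mathbb{R}}(U')^{2}\,dt\le\|U'\|_{L^{\infty}}\int_{\mathbb{R}}|U'|\,dt=\|U'\|_{L^{\infty}}(b-a)<\infty$ by monotonicity and Schauder, so you can drop the Hessian computation entirely.
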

\begin{proof}
   \textbf{Step 1: Classification of the four cases.}  
We first assume that Cases 3 and 4 do not occur, and aim to show that at least one of Cases 1 and 2 must hold.  

In fact, by Lemma~\ref{lem. classification of one limiting profile}, both $(\overline{u},\overline{v})$ and $(\underline{u},\underline{v})$ are constant solutions.  
Suppose, in addition, that Case 1 does not hold. Then we have
\begin{equation*}
(\overline{u},\overline{v}) \equiv \left(\sqrt{\frac{1+\omega}{2+\alpha}}, \sqrt{\frac{1+\omega}{2+\alpha}}\right) 
\quad \text{or} \quad (\overline{u},\overline{v}) \equiv (a,b).
\end{equation*}

Since 
\[
\frac{\partial u}{\partial x_{3}} > 0 > \frac{\partial v}{\partial x_{3}},
\] 
it follows that $\underline{u} < \overline{u}$ and $\underline{v} > \overline{v}$. In particular,
\begin{equation*}
\underline{u} < \max \left\{ \sqrt{\frac{1+\omega}{2+\alpha}}, a \right\} = \sqrt{\frac{1+\omega}{2+\alpha}}, 
\quad 
\overline{v} > \min \left\{ \sqrt{\frac{1+\omega}{2+\alpha}}, b \right\} = \sqrt{\frac{1+\omega}{2+\alpha}}.
\end{equation*}

Hence, the only constant solution remaining in Lemma~\ref{lem. classification of one limiting profile} for $(\underline{u},\underline{v})$ is $(a,b)$.  
This completes the classification of the limiting profiles.

\textbf{Step 2: Energy estimate.}  
Next, we establish the growth estimate \eqref{eq. limiting profile GL energy growth} for the Ginzburg-Landau energy.  

The estimates in Cases 1 and 2 are trivial, since both the infinitesimal Dirichlet energy and the potential energy vanish identically. The more delicate cases are Cases 3 and 4. We will provide a detailed proof of \eqref{eq. limiting profile GL energy growth} in Case 3; Case 4 follows by the same argument.  

In Case 3, Lemma~\ref{lem. classification of one limiting profile} implies that
\[
(\overline{u}(x'),\overline{v}(x')) = (U(x_{2}), V(x_{2})),
\] 
where $(U(t), V(t))$ is a one-dimensional solution satisfying the monotonicity
\[
U'(t) > 0 > V'(t).
\]  

Observe that the width of $B_{R}'$ in the $x_{1}$-direction is of order $O(R)$.  
Hence, to prove the energy inequality \eqref{eq. limiting profile GL energy growth}, it suffices to show that the one-dimensional pair $(U,V)$ satisfies the uniform energy bound
\[
J(U,V,[-R,R]) \le C.
\]  
More precisely, we need to verify that
\begin{equation}\label{eq. limiting profile, 1d, energy bound}
\int_{-R}^{R} \Big\{ \frac{|U'(t)|^{2} + |V'(t)|^{2}}{2} + W(U(t),V(t)) \Big\} dt \le C.
\end{equation}

The idea is to apply Lemma~\ref{lem. evolution of E(t)}.  
For the one-dimensional pair $(U,V)$ with monotonicity 
\[
U'(t) > 0 > V'(t),
\] 
we define the limiting profiles $(\overline{U},\overline{V})$ and $(\underline{U},\underline{V})$ in the same manner as in Definition~\ref{def. limiting profile}.  

Since these limiting profiles are defined on $\mathbb{R}^{1-1} = \mathbb{R}^{0} = \{0\}$, they must be constant. Moreover, they satisfy the system \eqref{eq. simplified B-E} in $\mathbb{R}^{0}$.  
Hence, the only possible values of the pairs $(\overline{U},\overline{V})$ and $(\underline{U},\underline{V})$ are
\[
(\overline{U},\overline{V}),\, (\underline{U},\underline{V}) \in \Big\{ (a,b),\, (b,a),\, \left(\sqrt{\frac{1+\omega}{2+\alpha}}, \sqrt{\frac{1+\omega}{2+\alpha}}\right) \Big\}.
\]  

Analogously to the discussion in Step 1, at least one of the following two cases must occur:
\begin{itemize}
    \item Case A: $(\overline{U},\overline{V}) = (b,a)$,
    \item Case B: $(\underline{U},\underline{V}) = (a,b)$.
\end{itemize}

Without loss of generality, assume that Case A holds. Let $H$ be sufficiently large and send $H \to +\infty$. Then
\[
\lim_{H \to +\infty} E(H) = \lim_{H \to +\infty} \int_{H-R}^{H+R} \Big\{ \frac{|U'(t)|^{2} + |V'(t)|^{2}}{2} + W(U(t),V(t)) \Big\} dt = 0.
\]

Applying the one-dimensional version of Lemma~\ref{lem. evolution of E(t)} to the pair $(U,V)$ then establishes \eqref{eq. limiting profile, 1d, energy bound}.  
This completes Step 2 of the proof of Lemma~\ref{lem. classification of two limiting profiles}.
\end{proof}

\subsection{Proof of Lemma~\ref{lem. energy growth} and Theorem~\ref{thm. main theorem}}
Finally, we prove Lemma~\ref{lem. energy growth} and Theorem~\ref{thm. main theorem}.

\begin{proof}[Proof of Lemma~\ref{lem. energy growth}]
By Lemma~\ref{lem. classification of two limiting profiles}, we may assume without loss of generality that either Case 1 or Case 3 holds. Consequently, the estimate \eqref{eq. limiting profile GL energy growth} is valid.  

Applying Lemma~\ref{lem. evolution of E(t)} in dimension $n=3$, we obtain
\begin{equation}\label{eq. proof of main, energy bound}
J(u,v,[-R,R]^{3}) 
\leq \int_{-\infty}^{\infty} \left| \frac{dE}{dt} \right| dt + R \cdot J(\overline{u},\overline{v},[-R,R]^{2}) 
\leq C R^{2}.
\end{equation}

For sufficiently large $R$, the same estimate holds when the cube $[-R,R]^{3}$ is replaced by the ball $B_{R}$. This completes the proof of Lemma~\ref{lem. energy growth}.
\end{proof}

\begin{proof}[Proof of Theorem~\ref{thm. main theorem}]
We argue similarly as in Corollary~\ref{cor. n=2 is more obvious}.  
We set
\begin{equation*}
    (\varphi,\psi):=\left(\frac{\partial u}{\partial x_{3}},\frac{\partial v}{\partial x_{3}}\right), 
    \qquad
    (\xi_{i},\eta_{i}):=\left(\frac{\partial u}{\partial x_{i}},\frac{\partial v}{\partial x_{i}}\right), 
    \quad i=1,2.
\end{equation*}
By Lemma~\ref{lem. derivative satisfies the Schrodinger equation} together with Lemma~\ref{lem. negative definite form}, we deduce that for
\begin{equation*}
    (\sigma_{i},\tau_{i})
    :=\left(\frac{\xi_{i}}{\varphi},\frac{\eta_{i}}{\psi}\right),
\end{equation*}
there exists a strictly positive function $\mathcal{P}(x)$ such that
\begin{equation*}
    Q_{(\varphi,\psi)}
    \begin{bmatrix}
        \sigma_{i}\\
        \tau_{i}
    \end{bmatrix}
    \;\leq\; -(\sigma_{i}-\tau_{i})^{2}\,\mathcal{P}(x)
    \qquad \text{everywhere in }\mathbb{R}^{n}.
\end{equation*}

In view of the growth rate estimate for the Ginzburg-Landau energy established in Lemma~\ref{lem. energy growth}, we can apply Lemma~\ref{lem. Liouville-type result with cut-off function}. This yields that both $(\sigma_{1},\tau_{1})$ and $(\sigma_{2},\tau_{2})$ are constants, and moreover satisfy $\sigma_{i}=\tau_{i}$ for $i=1,2$.  

Finally, by repeating the argument used in Corollary~\ref{cor. n=2 is more obvious}, we conclude that the pair $(u,v)$ is one-dimensional. By rotating the space, we may assume that now $(u,v)$ depends only on $x_3$. Its limiting profiles $(\overline{u},\overline{v})$ and $(\underline{u},\underline{v})$ must then be constant solutions, and $(\overline{u},\overline{v}) = (b,a)$, $(\underline{u},\underline{v}) = (a,b)$. Here, we remark that the limiting profiles cannot be $(\sqrt{\frac{1+\omega}{2+\alpha}},\sqrt{\frac{1+\omega}{2+\alpha}})$, otherwise it violates Step 3 of the proof of Lemma~\ref{lem. classification of one limiting profile}.
\end{proof}

\vspace{2mm}
\noindent \textbf{Acknowledgments.}
Wu is partially supported by National Natural Science Foundation of China (Grant No. 12401133) and the Guangdong Basic and Applied Basic Research Foundation (2025B151502069).

\vspace{2mm}
\noindent \textbf{Conflict of interest.} The authors do not have any possible conflicts of interest.

\vspace{2mm}

\noindent \textbf{Data availability statement.}
 Data sharing is not applicable to this article, as no data sets were generated or analyzed during the current study.




\end{document}